\newtheorem{thm}{Theorem}[section]
\newtheorem{cor}[thm]{Corollary}
\newtheorem{lem}[thm]{Lemma}
\newtheorem{exa}[thm]{Example}
\newtheorem{conj}[thm]{Conjecture}
\newtheorem{rem}[thm]{\bf{Remark}}
\numberwithin{equation}{section}
\title{\bf Counting the Number of   Centralizers of $2-$Element Subsets in a Finite Group}
\author{\bf  A. R. Ashrafi$^\star$, F. Koorepazan-Moftakhar and M. A. Salahshour}
\thanks{$^\star$Corresponding author (Email: ashrafi@kashanu.ac.ir)}
\address{Department of Pure Mathematics, Faculty of Mathematical Sciences,
University of Kashan, Kashan 87317$-$53153, I. R. Iran}
\date{}
\begin{document}

\maketitle

\begin{abstract}
Suppose $G$ is a finite group. The set of all centralizers of $2-$element subsets of $G$ is denoted by $2-Cent(G)$. A group $G$ is called $(2,n)-$centralizer if $|2-Cent(G)| = n$ and primitive $(2,n)-$centralizer if $|2-Cent(G)| = |2-Cent(\frac{G}{Z(G)})| = n$, where $Z(G)$ denotes the center of $G$. The aim of this paper is to present the main properties of $(2,n)-$centralizer groups among them a characterization of $(2,n)-$centralizer and primitive $(2,n)-$centralizer groups, $n \leq 9$, are given.

\vskip 3mm

\noindent{\bf Keywords:} $(2,n)-$centralizer group, primitive $(2,n)-$centralizer group, $n-$centralizer group.

\vskip 3mm

\noindent \textit{2010 Mathematics Subject Classification:} Primary: $20C15$; Secondary: $20D15$.
\end{abstract}

\bigskip

\section{Introduction}
Throughout this paper all groups are assumed to be finite and for a subset $A$ of a group $G$, the centralizer subgroup of $A$ in $G$ is denoted by $C_G(A)$. The cyclic group of order $n$, the dihedral group of order $2n$, the alternating group on $n$ symbols and the symmetric group of degree $n$ are denoted by  $\mathbb{Z}_n$, $D_{2n}$, $A_n$ and $S_n$, respectively. The holomorph of $G$ is denoted by $Hol(G)$ and $Z(G)$ denotes the center of $G$. The second center of $G$ which is denoted by $Z_2(G)$, is defined  as  $\frac{Z_2(G)}{Z(G)} = Z(\frac{G}{Z(G)})$. Set:
\begin{eqnarray*}
Cent(G) &=& \{ C_G(x) \mid x \in G\},\\
2-Cent(G) &=& \{ C_G(\{ x, y\}) \mid x, y \in G \ \& \ x \ne y\}.
\end{eqnarray*}
The group $G$ is called $n-$centralizer if $|Cent(G)| = n$ and in addition, if $\frac{G}{Z(G)}$  is  an $n-$centralizer group, then $G$ is said to be primitive $n-$centralizer group. It is called $(2,n)-$centralizer if $|2-Cent(G)| = n$ and primitive $(2,n)-$centralizer if $|2-Cent(G)| = |2-Cent(\frac{G}{Z(G)})| = n$.  To simplify our argument, a subgroup of $G$ in the form $C_G(\{ x, y\}) $ is called a $2-$element centralizer for $G$. It is clear that the number of $2-$element centralizers of $G$ is equal to $|2-Cent(G)|$. Note that for each element $x, y \in G$, $C_G(\{x,y\}) = C_G(x) \cap C_G(y)$.

The study of finite groups in terms of $|Cent(G)|$ was started by Belcastro and Sherman in 1994 \cite{6}. It is easy to see  that a group is $1-$centralizer if and only if it is abelian and there is no $2-$ and $3-$centralizer group. One of the present authors (ARA) \cite{2} constructed
$n-$centralizer groups, for each $n \ne 2, 3$ and primitive $n-$centralizer groups for each odd positive integer $\ne 3$. In \cite{4}, the authors proved that a group with $\leq 21$ element centralizers is solvable and gave a characterization of the simple group $A_5$ according to the number of element centralizers. Zarrin \cite{14} presented another proof for solvability of groups with at most $21$ element centralizers. Zarrin \cite{13} computed $|Cent(G)|$ for all minimal simple groups. As a consequence he proved that there are non-isomorphic finite simple groups
$G$ and $H$ such  that $|Cent(G)| = |Cent(H)|$. Kitture \cite{9} proved that  isoclinic groups have the same number of element centralizers and for each positive integer $n$ different from $2$ and $3$, there are only finitely many groups, up to isoclinism, with exactly $n$ element centralizers.

Suppose $R$ denotes the semidirect product of a cyclic group of order $5$ by a cyclic group of order $4$ acting faithfully. It is easy to see that
$R$ can be presented as  $R = \langle x, y \mid x^5 = y^4 = 1, xy = yx^3\rangle$. A finite $n-$centralizer group $G$ with $n \leq 10$ were determined in terms of the structure of $\frac{G}{Z(G)}$. For the sake of completeness, we collect these results in the following theorems:

\begin{thm}\label{thm1.1}
Suppose $G$ is a finite group. Then,
\begin{enumerate}
\item  $G$ is $4-$centralizer if and only if $\frac{G}{Z(G)} \cong \mathbb{Z}_2 \times \mathbb{Z}_2$ \cite{6}.

\item  $G$ is $5-$centralizer if and only if $\frac{G}{Z(G)} \cong \mathbb{Z}_3 \times \mathbb{Z}_3$ or $S_3$ \cite{6}.

\item If $G$ is $6-$centralizer, then $\frac{G}{Z(G)} \cong \mathbb{Z}_2 \times \mathbb{Z}_2 \times \mathbb{Z}_2$, $\mathbb{Z}_2 \times \mathbb{Z}_2 \times \mathbb{Z}_2 \times \mathbb{Z}_2$, $D_8$ or $A_4$ \cite{2}.

\item $G$ is primitive $7-$centralizer if and only if $\frac{G}{Z(G)} \cong  D_{10}$ or $R$ \cite{3}.

\item $G$ is $7-$centralizer if and only if $\frac{G}{Z(G)} \cong \mathbb{Z}_5 \times \mathbb{Z}_5$, $D_{10}$ or $R$ \cite{1}.

\item If $G$ is $8-$centralizer, then $\frac{G}{Z(G)} \cong \mathbb{Z}_2 \times \mathbb{Z}_2 \times \mathbb{Z}_2$, $D_{12}$ or $A_4$ \cite{1}.

\item $G$ is $9-$centralizer if and only if $\frac{G}{Z(G)} \cong D_{14}$, $\mathbb{Z}_7 \times \mathbb{Z}_7$, $Hol(\mathbb{Z}_7)$ or a non-abelian group of order 21 \cite{8}.

\item  $G$ is primitive $9-$centralizer if and only if $\frac{G}{Z(G)} \cong D_{14}$,  $Hol(\mathbb{Z}_7)$ or a non-abelian group of order $21$ \cite{5}.

\item  There is no $10-$centralizer groups of odd order \cite{7}.

\item If $G$ is a primitive $11-$centralizer group of odd order, then $\frac{G}{Z(G)} \cong (\mathbb{Z}_9 \times \mathbb{Z}_3) \rtimes \mathbb{Z}_3$ \cite{10}.
\end{enumerate}
\end{thm}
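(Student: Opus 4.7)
The statement of Theorem \ref{thm1.1} is a compendium of ten results taken essentially verbatim (up to notation) from the cited papers \cite{1}--\cite{10}. My plan is therefore not to re-derive all ten from scratch but to outline a uniform strategy that specialises to each item, and to pin-point where the deeper ingredients from the cited works must enter.

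The backbone of every item is the observation that $C_G(x)=C_G(xz)$ for all $z\in Z(G)$, so that proper centralizers of $G$ are parametrised by non-central cosets of $Z(G)$. This yields the fundamental inequality $|Cent(G/Z(G))|\leq |Cent(G)|$, with equality precisely under the primitive hypothesis. Combined with the classical fact that $G/Z(G)$ is never a non-trivial cyclic group when $G$ is non-abelian, this inequality confines the isomorphism type of $\overline{G}:=G/Z(G)$ to a finite, explicit list once $n$ is fixed.

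The execution for each $n\leq 10$ then follows a common template. First, I would use the inequality above to bound $|\overline{G}|$. Second, enumerate the non-cyclic groups of that order and compute $|Cent(\overline{G})|$ by direct inspection, retaining only the candidates compatible with the prescribed count $n$ (and, where relevant, with the primitive condition). Third, lift each surviving candidate to the corresponding central extension $G$ and decide whether every such extension realises $|Cent(G)|=n$. Items (1), (2), (4), (5), (7) and (8) produce full characterisations because the lifting step imposes no extra restriction. In items (3), (6) and (10) the centralizer count depends genuinely on the cocycle of the extension, so only a necessary condition on $\overline{G}$ survives.

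The principal obstacles, where appeals to deeper material in the cited papers become unavoidable, are the following. The non-existence statement (9) rests on the solvability theorem for groups with at most $21$ element centralizers proved in \cite{4,14}, combined with a Sylow-theoretic exclusion specific to odd order. The primitive items (4), (8) and (10) require the extra argument that primitivity forces the covering group to inherit the maximal abelian subgroup structure of $\overline{G}$ faithfully. Finally, identifying the less familiar quotients $R$, $Hol(\mathbb{Z}_7)$ and $(\mathbb{Z}_9\times\mathbb{Z}_3)\rtimes\mathbb{Z}_3$ in items (4), (5), (7), (8) and (10) demands a direct computation of the centralizer lattice in each of these semidirect products; this is the step I would expect to be the most delicate in practice, and it is exactly where the GAP-assisted enumerations of the source papers are indispensable.
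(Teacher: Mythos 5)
The paper gives no proof of Theorem \ref{thm1.1} at all: it is explicitly a compendium of results quoted from the cited sources, so the only argument the paper offers is the citations themselves. Your proposal is therefore not comparable to anything in the text, and judged on its own terms as a proof sketch it has a genuine gap at its core. Your ``backbone'' is the inequality $|Cent(G/Z(G))|\leq |Cent(G)|$, which you present as an immediate consequence of $C_G(x)=C_G(xz)$ for $z\in Z(G)$. That observation only yields $|Cent(G)|\leq [G:Z(G)]$ (the map $xZ(G)\mapsto C_G(x)$ is well defined and onto the proper centralizers); it does not give the inequality you state, because $C_G(x)/Z(G)$ need not be a centralizer in $G/Z(G)$ and distinct cosets with equal centralizers in $G$ may have distinct centralizers in the quotient. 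Whether $|Cent(G/Z(G))|\leq |Cent(G)|$ holds in general is a nontrivial question raised in this literature, not a free observation; indeed the paper's own Lemma \ref{lemm2.2} compares $Cent(G)$ with $2\text{-}Cent(G)$, never with $Cent(G/Z(G))$.

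Even granting that inequality, your second step fails: bounding $|Cent(\overline{G})|$ does not confine $\overline{G}$ to ``a finite, explicit list,'' since there are infinitely many groups with any attainable centralizer count (already all abelian groups are $1$-centralizer). The mechanism actually used in the cited papers is different: if $|Cent(G)|=n$ then $G$ is covered by its $n-1$ proper element centralizers, and covering theorems of Neumann, Scorza and Cohn on groups that are unions of few proper subgroups bound $[G:Z(G)]$ by a function of $n$; only after that does one enumerate the possible quotients and compute their centralizers (by hand or with GAP). Your sketch never obtains any bound on $[G:Z(G)]$, so the enumeration step has nothing to enumerate. Since this theorem is stated in the paper purely as imported background, the appropriate ``proof'' here is the citation; if you do want to reprove the items, the covering-theorem route is the one to take.
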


A group in which every non-central element has an abelian centralizer is called a $CA-$group \cite{115}.

\begin{thm}\label{thm1.2}
Suppose $n$ is a positive integer and $p$ is a prime. Then we have:
\begin{enumerate}

\item $($\cite[Lemma 2.4.]{1}$)$ Let $G$ be a finite non-abelian group and $\{x_1, \ldots , x_r\}$ be a set of
pairwise non-commuting elements of $G$ with maximal size. Then \label{thm1.2(1)}
\begin{enumerate}
\item $r \geq 3$. \label{thm1.2(1a)}
\item $r +1 \leq |Cent(G)|$. \label{thm1.2(1b)}
\item $r = 3$ if and only if $|Cent(G)| = 4$. \label{thm1.2(1c)}
\item $r = 4$ if and only if $|Cent(G)| = 5$. \label{thm1.2(1d)}
\end{enumerate}

\item $($\cite[Proposition 2.5]{1}$)$ Let $G$ be a finite group and let $X = \{x_1, \ldots, x_r\}$ be a set of
pairwise non-commuting elements of $G$ having maximal size. \label{thm1.2(2)}
\begin{enumerate}
\item If $|Cent(G)| < r + 4$, then for each element $x \in G$, $C_G(x)$ is abelian if and only if $C_G(x) = C_G(x_i)$
for some $i \in \{1, \ldots, r\}$. \label{thm1.2(2a)}

\item If $|Cent(G)| = r + 2$, then there exists a proper non-abelian centralizer
$C_G(x)$ which contains $C_G(x_{i_1})$, $C_G(x_{i_2} )$ and $C_G(x_{i_3})$ for three distinct $i_1$, $i_2$, $i_3 \in \{1, \ldots, r\}$. \label{thm1.2(2b)}
\end{enumerate}

\item $($\cite[Lemma 2.6]{1}$)$ Let $G$ be a finite non-abelian group. Then every proper centralizer
of $G$ is abelian if and only if $|Cent(G)| = r + 1$, where $r$ is the maximal size of a
set of pairwise non-commuting elements of $G$. \label{thm1.2(3)}

\item $($\cite[Theorem 1]{4}$)$ If $G$ is a finite group and $\frac{G}{Z(G)} \cong A_5$, then $|Cent(G)| = 22$ or $32$.\label{thm1.2(new)}

\item $($\cite[Theorem 2]{4}$)$ If $G$ is a finite simple group and $|Cent(G)| = 22$, then $G \cong A_5$.\label{thm1.2(new1)}

\item $($\cite[Lemma 2.1]{55}$)$ Let $|\frac{G}{Z(G)}| = pqr$, where $p, q$ and $r$ are primes not necessarily distinct. Then $G$ is $CA-$group. \label{thm1.2(4)}

\item $($\cite[Proposition 2.2]{55}$)$ Let $p$ be the smallest prime dividing $|G|$. If $[G : Z(G)] = p^3$,
then $|Cent(G)| = p^2 + p + 2$ or $p^2 + 2$. \label{thm1.2(45)}

\item $($\cite[Theorem 2.3]{55}$)$ If $G$ has an abelian normal subgroup of prime index, then $|Cent(G)| = |G'| + 2$. \label{thm1.2(5)}

\item $($\cite[Proposition 2.8]{55}$)$ Let $\frac{G}{Z(G)}$ be non-abelian, $n$ be an integer and $p$ be a prime. If $\frac{G}{Z(G)} \cong \mathbb{Z}_n \rtimes \mathbb{Z}_p$,  then $G$ has an abelian normal subgroup of index $p$ and $|G'| = n$. \label{thm1.2(6)}

\item $($\cite[Proposition 2.9]{55}$)$ Let $\frac{G}{Z(G)}$ be non-abelian. If $\frac{G}{Z(G)} \cong \mathbb{Z}_n \rtimes \mathbb{Z}_p$, then $|Cent(G)| = n + 2$. \label{thm1.2(7)}

\item $($\cite[Lemma 2.10]{55}$)$ If $\frac{G}{Z(G)}$ is  non-abelian and $\frac{G}{Z(G)} \cong \mathbb{Z}_n \rtimes \mathbb{Z}_p$ then
$G$ is a $CA-$group. \label{thm1.2(8)}

\item $($\cite[Theorem 5]{6}$)$ Let $p$ ba a prime. If $\frac{G}{Z(G)} \cong \mathbb{Z}_p \times \mathbb{Z}_p$, then $|Cent(G)| = p+2$. \label{thm1.2(9)}

\item $($\cite[Lemma 4, p. 303]{66}$)$ If $H$ is a normal abelian subgroup of a non-abelian group $G$ of prime index $p$, then $|G| = p|G'||Z(G)|$, and $|G : C_G(x)| = |G'|$ for $x \in G - H$. \label{thm1.2(10)}

\item $($\cite[Theorem A (I)]{65}$)$ Let $G$ be a non-abelian group. Then $G$ is a $CA-$group if and only if $G$ has an abelian normal subgroup of prime index. \label{thm1.2(11)}

\item $($\cite[Proposition 1.2]{Ito}$)$ If $G$ is a $CA-$group, then for each non-central element $x$ and $y$, $C_G(x) = C_G(y)$ or $C_G(x) \cap C_G(y) = Z(G)$. \label{thm1.2(12)}

\item $($\cite{4} and \cite[Theorem A]{14}$)$ Let $G$ be an $n-$centralizer finite group with $n \leq 21$, then $G$ is soluble. \label{thm1.2(15)}
\end{enumerate}
\end{thm}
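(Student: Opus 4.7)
Theorem~\ref{thm1.2} is a compendium of sixteen results lifted verbatim from the cited sources, so a complete proof consists of pointing to each reference. I will instead indicate the three clusters of items that share a common engine, describe how each cluster is verified, and flag the one item that carries the genuine weight.

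The first cluster is items (1)--(3), built on a fixed maximal set $X=\{x_1,\dots,x_r\}$ of pairwise non-commuting elements. Distinct members of $X$ have distinct centralizers (a common centralizer would force them to commute), so $\{C_G(x_1),\dots,C_G(x_r),G\}$ already supplies $r+1$ distinct centralizers, which is (1b). Item (1a) is the elementary fact that a non-abelian group admits at least three pairwise non-commuting elements, and (1c),(1d) follow by lifting the cosets $x_iZ(G)$ into $G/Z(G)$ and observing that $r=3$ (respectively $r=4$) forces $G/Z(G)$ to be $\mathbb Z_2\times\mathbb Z_2$ (respectively $S_3$ or $\mathbb Z_3\times\mathbb Z_3$). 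Items (2) and (3) are refinements obtained by bounding how much $|Cent(G)|$ exceeds $r+1$ and analysing which centralizers outside the list $\{C_G(x_i)\}$ can appear; the non-abelian centralizer in (2b) is produced by a chain argument on the inclusions among the $C_G(x_i)$.

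The second cluster is items (6)--(15), all running on the $CA$-group characterisation in (14): a non-abelian group is $CA$ if and only if it has an abelian normal subgroup of prime index. Granted this, item (13) delivers the index formula $|G|=p|G'||Z(G)|$, from which a coset enumeration produces $|Cent(G)|=|G'|+2$, i.e.\ item (8). Items (9)--(11) specialise to $G/Z(G)\cong\mathbb Z_n\rtimes\mathbb Z_p$, and the crucial step is to show the preimage of $\mathbb Z_n$ is abelian; this uses that its quotient by $Z(G)$ is cyclic. Items (6)--(7) handle $|G/Z(G)|=pqr$ and $[G:Z(G)]=p^3$ by enumerating the very short list of possible quotients and reducing each to the previous case. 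Item (12) is the $p=q$ case of (10), proved directly in \cite{6}, while item (15) is Ito's observation that in a $CA$-group every non-central element lies in a unique maximal abelian subgroup, namely its centralizer, so centralizers of non-commuting non-central elements intersect in $Z(G)$.

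The remaining items (4), (5), and (16) need separate tools. The two possible values $22,32$ in (4) come from the two central extensions of $A_5$: the trivial one yields $|Cent(A_5)|=22$ (directly computed from the conjugacy classes of $A_5$), while the Schur cover $SL_2(5)$ produces $32$. Item (5) is the converse, obtained by bounding the centralizer count of a non-abelian simple group from below by a function of $|G|$ and comparing with $22$. Item (16) is the principal obstacle and carries all the weight: it is Zarrin's solvability theorem, proved by a case analysis against the classification of minimal simple groups and an application of (4). Since we are allowed to quote the sources, the mathematical task reduces to checking that each formulation above matches its reference verbatim; no new argument is required.
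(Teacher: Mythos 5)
The paper gives no proof of Theorem \ref{thm1.2} at all: each item is imported verbatim from the literature with its citation attached, so the ``proof'' is exactly the list of references, and your proposal ultimately rests on those same citations, which matches the paper's approach. Your expository sketches are a harmless bonus, though note two small slips in them: item (12) is not literally the $p=q$ case of item (10), since $\mathbb{Z}_p\times\mathbb{Z}_p$ is abelian while item (10) assumes $\frac{G}{Z(G)}$ is non-abelian; and item (4) concerns every group $G$ with $\frac{G}{Z(G)}\cong A_5$ (there are infinitely many, e.g.\ $A_5\times A$ for $A$ abelian), not just the two stem extensions $A_5$ and $SL_2(5)$, so the cited proof must reduce the general case to these before the values $22$ and $32$ emerge.
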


Throughout this paper our notations are standard and taken mainly from \cite{11}. Our calculations are done with the aid of GAP \cite{12}.

\section{Some Basic Properties of $(2,n)-$Centralizer Groups }
In this section, a characterization of finite groups with at most nine  $2-$element  centralizers are given.

\begin{lem}\label{lemm2.1}
Let $G$ be a non-abelian group. Then
\begin{enumerate}
\item $Z(G) \notin Cent(G)$.
\item $G \notin 2-Cent(G)$ if and only if $Z(G) = 1$.
\end{enumerate}
\end{lem}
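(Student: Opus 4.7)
The plan is to derive both items directly from the definitions, using only the elementary facts that $x \in C_G(x)$ for every $x$, that $C_G(z) = G$ iff $z \in Z(G)$, and the identity $C_G(\{x,y\}) = C_G(x) \cap C_G(y)$ recorded in the introduction.

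For part (1) I would argue by contradiction: suppose $Z(G) = C_G(x)$ for some $x \in G$. Since $x \in C_G(x)$, this forces $x \in Z(G)$, and hence $C_G(x) = G$. Therefore $Z(G) = G$, contradicting the assumption that $G$ is non-abelian. No case analysis is needed.

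For part (2), I would prove the two implications separately. For the contrapositive of the forward direction, assume $Z(G) \neq 1$ and choose $z \in Z(G) \setminus \{1\}$. Then $C_G(1) = C_G(z) = G$, and since $1 \neq z$ the pair $\{1,z\}$ is a genuine $2$-element subset, giving
\[
C_G(\{1,z\}) = C_G(1) \cap C_G(z) = G \in 2\text{-}Cent(G).
\]
Conversely, if $G \in 2\text{-}Cent(G)$, write $G = C_G(\{x,y\}) = C_G(x) \cap C_G(y)$ with $x \neq y$. Then $C_G(x) = C_G(y) = G$, so both $x$ and $y$ lie in $Z(G)$; since $x \neq y$, at least one of them is nontrivial, hence $Z(G) \neq 1$.

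There is no genuine obstacle here: both statements reduce to a one-line argument once the definition $C_G(\{x,y\}) = C_G(x) \cap C_G(y)$ is invoked. The only point that needs a brief mention is that in the forward direction of part (2) one must exhibit a pair with distinct entries to respect the requirement $x \neq y$ in the definition of $2\text{-}Cent(G)$, which is why taking $x = 1$ and $y = z \neq 1$ is the cleanest choice.
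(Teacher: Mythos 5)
Your proof is correct and follows essentially the same route as the paper: part (1) is the same contradiction argument (the paper phrases it as a two-case split, but the key observation $x \in C_G(x)$ forcing $Z(G)=G$ is identical), and part (2) is exactly the paper's remark that for distinct $x,y$ one has $C_G(x)\cap C_G(y)=G$ if and only if both $x$ and $y$ are central, spelled out in both directions. No gaps.
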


\begin{proof}
Let $Z(G) \in Cent(G)$. Set $Cent(G) = \{G, C_G(x_1), \ldots, C_G(x_r)\}$, where $x_i \notin Z(G)$, $1 \leq i \leq r$. Then either $Z(G) = G$ or there exists $x_i$, $1 \leq i \leq r$, such that $Z(G) = C_G(x_i)$. Both of these cases will lead to contradiction. Thus, $Z(G) \notin Cent(G)$ that  completes Part (1). To prove the Part (2), it is enough to note that for every $x, y \in G$ such that $x \neq y$, $x, y \in Z(G)$ if and only if $C_G(x) \cap C_G(y) = G$.
\end{proof}

The previous lemma shows that if $Z(G) \in Cent(G)$, then $G$ is abelian. The following simple lemmas are crucial in our main results.

\begin{lem}\label{lemm2.2}
Let $G$ be a non-abelian group. Then
\begin{enumerate}
\item If $Z(G) \neq 1$, then $Cent(G) \varsubsetneqq 2-Cent(G)$. In particular $|Cent(G)| < |2-Cent(G)|$.
\item If $Z(G) = 1$, then $|Cent(G)| \leq |2-Cent(G)|$.
\end{enumerate}
\end{lem}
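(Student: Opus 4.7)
The plan is to handle both parts with the same two ingredients: (i) embedding element centralizers into $2-Cent(G)$, and (ii) exhibiting an explicit $2$-element centralizer that is not an element centralizer of any element of $G$.

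For (i), observe that for every non-identity $x \in G$ one has
\[
C_G(x) = C_G(x) \cap G = C_G(x) \cap C_G(1) = C_G(\{x,1\}),
\]
so $C_G(x) \in 2-Cent(G)$. When $Z(G) \neq 1$, the remaining element centralizer $G = C_G(1)$ is also in $2-Cent(G)$: pick any $z \in Z(G) \setminus \{1\}$ and note $G = C_G(\{1,z\})$. This already gives $Cent(G) \subseteq 2-Cent(G)$ in part (1), and in part (2) shows that the $|Cent(G)|-1$ centralizers of non-identity elements all lie in $2-Cent(G)$, while $G \notin 2-Cent(G)$ by Lemma~\ref{lemm2.1}(2).

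For (ii), using that $G$ is non-abelian, pick $a,b \in G$ with $ab \neq ba$ (so in particular $a \neq b$) and set $H = C_G(\{a,b\}) = C_G(a) \cap C_G(b) \in 2-Cent(G)$. The main substantive point, and the one I expect to be the crux, is to verify $H \notin Cent(G)$: if $H = C_G(w)$ for some $w \in G$, then $w \in C_G(w) = H$ forces $w$ to commute with both $a$ and $b$, and hence $a, b \in C_G(w) = H \subseteq C_G(b)$, contradicting $ab \neq ba$. The little symmetry $x \in C_G(y) \iff y \in C_G(x)$ is doing all the work.

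Combining (i) and (ii) closes both parts. For part (1), $H \in 2-Cent(G) \setminus Cent(G)$ together with $Cent(G) \subseteq 2-Cent(G)$ yields the strict inclusion $Cent(G) \varsubsetneqq 2-Cent(G)$ and the strict cardinality inequality. For part (2), the element $H$ together with the $|Cent(G)|-1$ non-identity element centralizers are pairwise distinct members of $2-Cent(G)$, so $|2-Cent(G)| \geq (|Cent(G)|-1) + 1 = |Cent(G)|$.
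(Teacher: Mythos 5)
Your proof is correct, and the key step differs from the paper's in an interesting way. The embedding step is the same in both arguments: $C_G(x)=C_G(\{x,e\})$ puts every proper element centralizer into $2-Cent(G)$, and $G=C_G(\{e,z\})$ for a nontrivial central $z$ handles $G$ itself when $Z(G)\neq 1$. Where you diverge is in proving strictness. The paper argues by contradiction: assuming $2-Cent(G)=Cent(G)$, it intersects the proper centralizers one at a time, using closure under pairwise intersection to stay inside $Cent(G)$, until the chain terminates at $Z(G)=\bigcap_i C_G(x_i)$, which contradicts $Z(G)\notin Cent(G)$ (Lemma \ref{lemm2.1}(1)). You instead produce an explicit witness: for non-commuting $a,b$, the subgroup $H=C_G(\{a,b\})$ cannot equal any $C_G(w)$, because $w\in C_G(w)=H$ would force $a,b\in C_G(w)\subseteq C_G(b)$ and hence $ab=ba$; the symmetry $x\in C_G(y)\iff y\in C_G(x)$ does all the work. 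Your route is more direct (no induction, no appeal to Lemma \ref{lemm2.1}(1)) and has a concrete payoff in part (2): the paper there only exhibits $Cent(G)\setminus\{G\}$ inside $2-Cent(G)$, which by itself gives $|Cent(G)|-1\leq |2-Cent(G)|$, whereas your witness $H\notin Cent(G)$ supplies the additional element needed for the stated bound $|Cent(G)|\leq |2-Cent(G)|$. (The trade-off is that the paper's iterated-intersection argument also identifies $Z(G)$ itself as the eventual extra member of $2-Cent(G)$, a fact reused later, e.g.\ in Theorem \ref{thm2.6}.)
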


\begin{proof}
Our main proof will consider the following two separate cases:
\begin{enumerate}
\item Let $Z(G) \neq 1$ and $Cent(G) = \{G, C_G(x_1), \ldots, C_G(x_r)\}$ in which $x_i \notin Z(G)$, $1 \leq i \leq r$. Since $Z(G) \neq 1$, by Lemma \ref{lemm2.1}(2), $G \in 2-Cent(G)$. On the other hand, for every $1 \leq i \leq r$, $C_G(x_i) = C_G(\{x_i, e\}) \in 2-Cent(G)$. Therefore, $Cent(G) \subseteq 2-Cent(G)$. Suppose $2-Cent(G) = Cent(G)$. Then for every $x, y \in G \setminus Z(G)$
$$C_G(x) \cap C_G(y) = C_G(\{x, y\}) \in 2-Cent(G) = Cent(G) = \{G, C_G(x_1), \ldots, C_G(x_r)\}$$
and by a similar argument as the proof of Lemma \ref{lemm2.1}(2), $C_G(x) \cap C_G(y) \neq G$. So there exists $1 \leq i \leq r$, such that $C_G(x) \cap C_G(y) = C_G(x_i)$. By our assumption, $Z(G) = C_G(x_1) \cap C_G(x_2) \cap \ldots \cap C_G(x_r)$. Now an inductive argument shows that $Z(G) = C_G(x_t)$, for some $t$, $1 \leq t \leq r$. Therefore, $x_t \in Z(G)$ which is impossible.

\item Suppose $Z(G) = 1$ and set $Cent(G) = \{G, C_G(x_1), \ldots, C_G(x_r)\}$ in which $x_i \notin Z(G)$, $1 \leq i \leq r$. By Lemma \ref{lemm2.1}(2), $G \notin 2-Cent(G)$ and according to Lemma \ref{lemm2.2}(1), $Cent(G) \setminus \{G\} \subset 2-Cent(G)$. Thus $|Cent(G)| \leq |2-Cent(G)|$.
\end{enumerate}
This completes the proof.
\end{proof}

\begin{lem}\label{lemm2.3}
Suppose $H$ and $K$ are two finite groups. Then $|Cent(H \times K)| = |Cent(H)||Cent(K)|$. In particular, if $H$ is abelian, then $|Cent(H \times K)| = |Cent(K)|$.
\end{lem}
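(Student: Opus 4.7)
The plan is to reduce the equality $|Cent(H\times K)| = |Cent(H)||Cent(K)|$ to a bijection between $Cent(H)\times Cent(K)$ and $Cent(H\times K)$ induced by the componentwise structure of the direct product.

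First I would establish the fundamental identity: for every $(h,k) \in H \times K$,
\[
C_{H\times K}((h,k)) \;=\; C_H(h) \times C_K(k).
\]
This is an immediate check: $(a,b)$ commutes with $(h,k)$ in $H\times K$ if and only if $ah=ha$ in $H$ and $bk=kb$ in $K$. Thus every element of $Cent(H\times K)$ has the shape $A\times B$ for some $A\in Cent(H)$ and $B\in Cent(K)$.

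Next I would define the map
\[
\Phi\colon Cent(H) \times Cent(K) \longrightarrow Cent(H\times K),\qquad (A,B) \mapsto A\times B,
\]
and argue it is a bijection. Surjectivity is exactly the identity displayed above. For injectivity, if $A_1\times B_1 = A_2\times B_2$ as subsets of $H\times K$, then projecting onto the first coordinate yields $A_1=A_2$ (projection is well-defined because $B_i$ is nonempty, containing the identity of $K$), and projecting onto the second coordinate yields $B_1=B_2$. The conclusion $|Cent(H\times K)| = |Cent(H)|\cdot|Cent(K)|$ follows by counting.

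Finally, for the ``in particular'' clause, if $H$ is abelian then $C_H(h)=H$ for every $h\in H$, so $Cent(H)=\{H\}$ and $|Cent(H)|=1$, whence $|Cent(H\times K)|=|Cent(K)|$. The only point requiring any attention is the injectivity step, where one must note that the factors in a direct product can be recovered by projection; this is essentially trivial but is the place where the argument could go wrong if one ignored that $A\in Cent(H)$ is in particular a subgroup, hence nonempty.
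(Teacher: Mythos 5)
Your proof is correct, and it rests on the same key identity as the paper's, namely $C_{H\times K}((h,k)) = C_H(h)\times C_K(k)$; the difference lies in how the count is organized. The paper partitions the pairs $(x,y)$ into four cases according to whether $x$ is central in $H$ and $y$ is central in $K$, counts the distinct centralizers arising in each case ($1$, $|Cent(H)|-1$, $|Cent(K)|-1$, and $(|Cent(H)|-1)(|Cent(K)|-1)$ respectively), and sums to get $|Cent(H)||Cent(K)|$. You instead exhibit a single bijection $\Phi\colon Cent(H)\times Cent(K)\to Cent(H\times K)$, $(A,B)\mapsto A\times B$, with surjectivity given by the key identity and injectivity by recovering the factors of a product of nonempty sets via projection. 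Your packaging is arguably cleaner: the paper's case analysis implicitly relies on the fact that the centralizers produced in different cases (and within each case) are pairwise distinct --- it asserts, for instance, that the centralizers of type $H\times C_K(y)$ differ from those of type $C_H(x)\times K$ --- and this is exactly the injectivity of $\Phi$, which you isolate and justify once. The paper's version, on the other hand, makes visible the breakdown of $Cent(H\times K)$ by ``type,'' which is the template its Theorem~\ref{thm2.4} then follows for the $2$-element centralizer count, where the four cases genuinely behave differently depending on whether $Z(H)$ and $Z(K)$ are trivial. Either argument is complete and correct for the lemma as stated.
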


\begin{proof}
It is clear that if $(x, y) \in H \times K$, then $C_{H \times K}((x,y)) = C_H(x) \times C_K(y)$. We now consider four separate cases for the pair $(x,y)$ as follows:
\begin{enumerate}
\item \textit{$x$ is a central element of $H$ and $y$ is a central element of $K$.} In this case, $C_{H \times K}((x,y)) = C_H(x) \times C_K(y) = H \times K$. Therefore in this case  we have only one element centralizer in $H \times K$.
\item \textit{$x$ is not central in $H$ but $y$ is a central element of $K$.} Since $C_{H \times K}((x,y)) = C_H(x) \times K$, there are exactly $|Cent(H)| - 1$ element centralizers different from $H \times K$.
\item \textit{$x$ is a central element of $H$ and $y$ is not central in $K$.} In this case, $C_{H \times K}((x,y)) = H \times C_K(y)$ and again we have $|Cent(K)| - 1$ element centralizers different from $H \times K$. Note that all of these element centralizers are different from those given in part (2).
\item \textit{$x$ is not central in $H$ and $y$ is not central in $K$.} By our assumption, $C_{H \times K}((x,y)) = C_H(x) \times C_K(y)$ and there are $(|Cent(H)| -1)(|Cent(K)|-1)$ different element centralizers in $H \times K$.
\end{enumerate}
Therefore
\begin{eqnarray*}
|Cent(H \times K)| &=& 1 + (|Cent(H)| - 1) + (|Cent(K)| - 1) +  (|Cent(H)| - 1))(|Cent(K)| - 1)\\
&=& |Cent(H)||Cent(K)|.
\end{eqnarray*}
The second part is obvious.
\end{proof}

\begin{thm}\label{thm2.4}
Suppose $H$ and $K$ are two finite groups. Then
$$|2-Cent(H \times K)| = |2-Cent(H)||2-Cent(K)| + \delta(K)|2-Cent(H)| + \delta(H)|2-Cent(K)|$$
in which
$$
\delta(G) = \left\lbrace
\begin{array}{cc}
1 & Z(G) = 1 \\
0 & Z(G) \neq 1
\end{array}
 \right..
$$
\end{thm}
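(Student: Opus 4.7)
The plan is to follow the template of Lemma \ref{lemm2.3}, exploiting the identity $C_{H\times K}((x,y))=C_H(x)\times C_K(y)$, but keeping careful track of which subgroups of $H\times K$ actually arise as $2$-element centralisers. Intersecting the identity above gives
$$C_{H\times K}(\{(x_1,y_1),(x_2,y_2)\}) = \bigl(C_H(x_1)\cap C_H(x_2)\bigr)\times\bigl(C_K(y_1)\cap C_K(y_2)\bigr),$$
and since a product subgroup $A\times B$ of $H\times K$ is uniquely determined by its two projections, counting $|2-Cent(H\times K)|$ reduces to counting the set $\mathcal{P}$ of pairs $(A,B)$ of subgroups realized in this way.

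To classify $\mathcal{P}$ I would split on whether $x_1=x_2$ and whether $y_1=y_2$; the constraint $(x_1,y_1)\neq (x_2,y_2)$ rules out only the joint case. Setting
\begin{align*}
S_1 &= 2\text{-}Cent(H)\times 2\text{-}Cent(K),\\
S_2 &= Cent(H)\times 2\text{-}Cent(K),\\
S_3 &= 2\text{-}Cent(H)\times Cent(K),
\end{align*}
corresponding respectively to $x_1\neq x_2 \ \&\ y_1\neq y_2$; $x_1=x_2 \ \&\ y_1\neq y_2$; and $x_1\neq x_2\ \&\ y_1=y_2$, one checks by explicit construction of the witnesses that $\mathcal{P}=S_1\cup S_2\cup S_3$. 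Writing $U = Cent(H)\cup 2\text{-}Cent(H)$ and $V = Cent(K)\cup 2\text{-}Cent(K)$, a short membership analysis rearranges this as
$$\mathcal{P} = (U\times V) \setminus \bigl((U\setminus 2\text{-}Cent(H))\times (V\setminus 2\text{-}Cent(K))\bigr).$$

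At this stage Lemma \ref{lemm2.2} supplies all the remaining arithmetic: whether or not $Z(H)=1$, one obtains $|U|=|2-Cent(H)|+\delta(H)$ and $|U\setminus 2\text{-}Cent(H)|=\delta(H)$, and likewise for $V$ over $K$. Expanding $|\mathcal{P}|=|U||V|-\delta(H)\delta(K)$ and cancelling the mixed term produces the announced formula. The main obstacle I anticipate is not the algebra but the boundary verification: one must confirm that the pair $(H,K)$ belongs to $\mathcal{P}$ precisely when $H\times K\in 2\text{-}Cent(H\times K)$, which by Lemma \ref{lemm2.1}(2) happens iff $Z(H)\neq 1$ or $Z(K)\neq 1$. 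The subtracted $\delta(H)\delta(K)$ records the unique pair $(H,K)$ that must be excluded when both centres are trivial, so handling this edge case correctly is the crux of the proof.
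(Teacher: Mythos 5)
Your proposal is correct and follows essentially the same route as the paper: both rest on the identity $C_{H\times K}(\{(x_1,y_1),(x_2,y_2)\})=\bigl(C_H(x_1)\cap C_H(x_2)\bigr)\times\bigl(C_K(y_1)\cap C_K(y_2)\bigr)$ together with Lemma \ref{lemm2.1}(2) and Lemma \ref{lemm2.2}, which identify $Cent(G)\setminus 2-Cent(G)$ as $\{G\}$ or $\emptyset$ according to $\delta(G)$. The only difference is organizational: the paper splits into four cases according to whether $Z(H)$ and $Z(K)$ are trivial, whereas you absorb all cases into the single count $|U||V|-\delta(H)\delta(K)$, which makes explicit the bookkeeping the paper leaves implicit.
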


\begin{proof}
It is clear that for every $(x,y) \in H \times K$, $C_{H \times K}((x,y)) = C_H(x) \times C_K(y)$. For every $(m,n) \in H \times K$, $(m,n) \in C_{H \times K}((x,y)) \cap C_{H \times K}((a,b))$ if and only if $(m,n) \in C_{H \times K}((x,y))$ and $(m,n) \in C_{H \times K}((a,b))$ if and only if $(m,n) \in (C_H(x) \times C_K(y))$ and $(m,n) \in (C_H(a) \times C_K(b))$ if and only if $m \in C_H(x)$, $n \in C_K(y)$, $m \in C_H(a)$ and $n \in C_K(b)$ if and only if $m \in C_H(x) \cap C_H(a)$ and $n \in C_K(y) \cap C_K(b)$ if and only if $(m,n) \in \left(C_H(x) \cap C_H(a) \right) \times \left(C_K(y) \cap C_K(b) \right)$. Therefore, for every $(x,y), (a,b) \in H \times K$,
\begin{equation}\label{eq1}
C_{H \times K}((x,y)) \cap C_{H \times K}((a,b)) = \left(C_H(x) \cap C_H(a) \right) \times \left(C_K(y) \cap C_K(b) \right).
\end{equation}

We have the following four cases:

\begin{enumerate}
\item \textit{$Z(H) \neq 1$ and $Z(K) \neq 1$}. By Lemma \ref{lemm2.2}(1),  $Cent(H) \subset 2-Cent(H)$ and $Cent(K) \subset 2-Cent(K)$. Now by Equation \ref{eq1},  $|2-Cent(H \times K)|$ = $|2-Cent(H)||2-Cent(K)|.$

\item \textit{$Z(H) \neq 1$ and $Z(K) = 1$}. Again apply Lemma \ref{lemm2.2}(1) to show that $Cent(H) \subset 2-Cent(H)$ and by  Lemma \ref{lemm2.1}(2) $K \in Cent(K) \setminus 2-Cent(K)$. Therefore, $$|2-Cent(H \times K)| = |2-Cent(H)||2-Cent(K)| + |2-Cent(H)|.$$

\item \textit{$Z(H) = 1$ and $Z(K) \neq 1$}. A similar argument as Part (2) show that
 $$|2-Cent(H \times K)| = |2-Cent(H)||2-Cent(K)| + |2-Cent(K)|.$$

\item  \textit{$Z(H) = 1$ and $Z(K) = 1$}. Again as Part (2), $|2-Cent(H \times K)| = |2-Cent(H)||2-Cent(K)| + |2-Cent(H)| + |2-Cent(K)|.$
\end{enumerate}
Therefore,
$$|2-Cent(H \times K)| = |2-Cent(H)||2-Cent(K)| + \delta(K)|2-Cent(H)| + \delta(H)|2-Cent(K)|,$$
which completes the proof.
\end{proof}

\begin{exa}\label{exa2.5}
In this example the number of $2-$element subset centralizers of the group $S_n \times S_n$ is computed. Suppose $S_n$ has exactly $r$ $2-$element subset centralizers. Since $S_n$ is centerless,  $$|2-Cent(S_n \times S_n)| = |2-Cent(S_n)|^2 + 2|2-Cent(S_n)| = r^2 + 2r.$$
\end{exa}
\begin{cor} \label{2.6}
Suppose $H_1, H_2, \ldots, H_n$ are groups and $A=\{1,2,\ldots,n\}$. Then,
\begin{eqnarray*}
|2-Cent(\prod_{i=1}^{n}H_i)| & = & \prod_{i=1}^{n}|2-Cent(H_i)|\\
  & + & \sum_{i=1}^{n-1}\Biggl(\sum_{B_i\subset A}\biggl(\Bigl(\prod_{x\in B_i}\delta(H_x)\Bigr)\Bigl(\prod_{y\in B'_i}|2-Cent(H_y)|\Bigr)\biggr)\Biggr)
\end{eqnarray*}
in which $B_i$ is a $i-$subset of $A$ and $B'_i=A-B_i$. Moreover, by means of $\sum_{i=1}^{n-1}\sum_{B_i\subset A}$ we consider summation on all $i-$element subsets of $A$ such that $1\leq i\leq n-1$.
\end{cor}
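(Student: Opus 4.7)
The plan is induction on $n$, with Theorem \ref{thm2.4} serving as the base case $n=2$. For bookkeeping it is convenient to first rewrite the claimed right-hand side as one single sum over all proper subsets $B \subsetneq A$:
\begin{equation*}
|2-Cent(\prod_{i=1}^{n}H_i)| = \sum_{B \subsetneq A} \Bigl(\prod_{x \in B}\delta(H_x)\Bigr)\Bigl(\prod_{y \in A \setminus B}|2-Cent(H_y)|\Bigr),
\end{equation*}
where the choice $B = \emptyset$ produces the leading term $\prod_{i=1}^{n}|2-Cent(H_i)|$ and the remaining nonempty proper subsets, grouped by cardinality $i = 1,\ldots,n-1$, reproduce the double sum in the statement. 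For $n=2$ the proper subsets of $\{1,2\}$ are $\emptyset,\{1\},\{2\}$, and the three contributions are precisely those appearing in Theorem \ref{thm2.4}.

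For the inductive step I set $H = \prod_{i=1}^{n-1}H_i$ and apply Theorem \ref{thm2.4} to the pair $(H,H_n)$:
\begin{equation*}
|2-Cent(H \times H_n)| = |2-Cent(H)|\,|2-Cent(H_n)| + \delta(H_n)|2-Cent(H)| + \delta(H)|2-Cent(H_n)|.
\end{equation*}
The only auxiliary ingredient needed is the elementary identity $Z(H_1 \times \cdots \times H_{n-1}) = Z(H_1) \times \cdots \times Z(H_{n-1})$, which yields $\delta(H) = \prod_{i=1}^{n-1}\delta(H_i)$. Substituting the induction hypothesis for $|2-Cent(H)|$ and this product formula for $\delta(H)$ into the three terms above produces three partial sums indexed by subsets of $A' = \{1,\ldots,n-1\}$.

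The remaining task, and the main bookkeeping obstacle, is to recognise that these three partial sums partition the family of proper subsets of $A$ into disjoint, exhaustive classes. Concretely, the term $|2-Cent(H)|\,|2-Cent(H_n)|$ expanded by induction, under the identification $C \mapsto B := C$, contributes exactly those $B$ with $n \notin B$ and $B \neq A'$; the term $\delta(H_n)|2-Cent(H)|$, under the relabelling $C \mapsto B := C \cup \{n\}$, contributes exactly those $B$ with $n \in B$ (which automatically forces $B \neq A$ since $C \subsetneq A'$); and the term $\delta(H)|2-Cent(H_n)|$ contributes the single extremal subset $B = A'$. Since these three classes are mutually disjoint and cover $\{B : B \subsetneq A\}$, summing them reassembles the single sum displayed above, completing the induction. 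The only real difficulty is this three-fold re-indexing and verifying that $B = A'$ is counted exactly once; once the re-indexing is laid out, the identity is immediate.
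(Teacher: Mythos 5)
Your proof is correct and follows essentially the same route as the paper: induction on $n$ with Theorem \ref{thm2.4} as the base case, applying Theorem \ref{thm2.4} to the pair $(\prod_{i=1}^{n-1}H_i,\,H_n)$ together with the identity $\delta(\prod_i H_i)=\prod_i\delta(H_i)$. Your reformulation as a single sum over proper subsets $B\subsetneq A$ and the explicit three-way partition of those subsets actually makes the final re-indexing step more transparent than the paper's version, which passes from the expanded expression to $\prod_{i=1}^{k+1}|2-Cent(H_i)|+M_k$ without spelling out that bookkeeping.
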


\begin{proof}
Induct on $n$. By Theorem \ref{thm2.4}, the result is valid for $n = 2$. Suppose the result is correct for $n=k$, $k \geq 3$. Then,
\[|2-Cent(\prod_{i=1}^{k}H_i)| = \prod_{i=1}^{k}|2-Cent(H_i)|+M_{k-1}\]
in which
\[M_{k-1}=\sum_{i=1}^{k-1}\Biggl(\sum_{B_i\subset A}\biggl[\Bigl(\prod_{i\in B_i}\delta(H_i)\Bigr)\Bigl(\prod_{i\in B'_i}|2-Cent(H_i)|\Bigr)\biggr]\Biggr).\]
We prove the result for $n=k+1$. Since  $\delta(\prod_{i=1}^nH_i)=\prod_{i=1}^n\delta(H_i)$, by Theorem \ref{thm2.4}:
\begin{eqnarray*}
|2-Cent(\prod_{i=1}^{k+1}H_i)| & = & |2-Cent(\prod_{i=1}^{k}H_i\times H_{k+1})| \\
 & = & |2-Cent(\prod_{i=1}^{k}H_i)||2-Cent(H_{k+1})| \\
 &  & +\delta(H_{k+1})|2-Cent(\prod_{i=1}^{k}H_i)|+\delta(\prod_{i=1}^kH_i)|2-Cent(H_{k+1})| \\
 & = & (\prod_{i=1}^{k}|2-Cent(H_i)|+M_{k-1})|2-Cent(H_{k+1})| \\
 &  & +\delta(H_{k+1})(\prod_{i=1}^{k}|2-Cent(H_i)|+M_{k-1}) \\
 &  & +(\prod_{i=1}^n\delta(H_i))|2-Cent(H_{k+1})| \\
 & = & (\prod_{i=1}^{k}|2-Cent(H_i)|)|2-Cent(H_{k+1})| \\
 &  & +M_{k-1}|2-Cent(H_{k+1})|+\delta(H_{k+1})(\prod_{i=1}^{k}|2-Cent(H_i)|) \\
 &  & +\delta(H_{k+1})M_{k-1}+(\prod_{i=1}^n\delta(H_i))|2-Cent(H_{k+1})| \\
 & = & \prod_{i=1}^{k+1}|2-Cent(H_i)|+M_k,
\end{eqnarray*}
proving the corollary.
\end{proof}

\begin{exa}
In this example the case of  $n=3$ in Corollary \ref{2.6} is completed. Suppose $A=\{1,2,3\}$ and for $i=1,2$, $B_1=\{1\}, \{2\}$ or $\{3\}$ and $B_2 = \{1,2\}, \{1,3\}$ or $\{2,3\}$. Therefore,
\begin{eqnarray*}
|2-Cent(H_1\times H_2\times H_3)| & = & |2-Cent(H_1)||2-Cent(H_2)||2-Cent(H_3)|\\
  & +& \delta(H_1)|2-Cent(H_2)||2-Cent(H_3)|\\
  &+ & \delta(H_2)|2-Cent(H_1)||2-Cent(H_3)|\\
  & +& \delta(H_3)|2-Cent(H_1)||2-Cent(H_2)|\\
  &+ & \delta(H_1)\delta(H_2)|2-Cent(H_3)|\\
  & +& \delta(H_1)\delta(H_3)||2-Cent(H_2)|\\
  & +& \delta(H_2)\delta(H_3)|2-Cent(H_1)|.
\end{eqnarray*}
\end{exa}

\section{The Number of $2-$Element Centralizers of $CA-$Groups}

The aim of this section is to compute the number of $2-$element  centralizers of  $CA-$groups. We start by the following crucial result.

\begin{thm}\label{thm2.6}
Let $G$ be a $CA-$group. Then the following holds:
\begin{enumerate}
\item If $Z(G) = 1$, then $|2-Cent(G)| = |Cent(G)|$,
\item If $Z(G) \neq 1$, then $|2-Cent(G)| = |Cent(G)|+1$.
\end{enumerate}
\end{thm}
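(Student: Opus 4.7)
The plan is to exploit the Ito-type dichotomy available in a $CA$-group (item 15 of Theorem \ref{thm1.2}): for any two non-central elements $x, y \in G$, one has either $C_G(x) = C_G(y)$ or $C_G(x) \cap C_G(y) = Z(G)$. Combined with the basic inclusions of Lemmas \ref{lemm2.1} and \ref{lemm2.2}, this should force $2-Cent(G)$ to consist of $Cent(G)$ together with the single extra subgroup $Z(G)$ (removing $G$ itself when $Z(G)=1$), which is exactly what the two counts in the statement encode. Throughout I am tacitly assuming $G$ is non-abelian, since the abelian case falls outside the setup of Lemma \ref{lemm2.1}.

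First I would fix $Cent(G) = \{G, C_G(x_1), \ldots, C_G(x_r)\}$ with each $x_i$ non-central, and for arbitrary distinct $x, y \in G$ analyse $C_G(\{x,y\}) = C_G(x) \cap C_G(y)$ by cases on whether $x$ or $y$ lies in $Z(G)$. If both are central the intersection is $G$; if exactly one is central the intersection is an element centralizer; if both are non-central the $CA$-dichotomy makes the intersection either an element centralizer or exactly $Z(G)$. Thus $2-Cent(G) \subseteq Cent(G) \cup \{Z(G)\}$.

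Next I would show $Z(G)$ is actually attained and is genuinely new. Since $G$ is non-abelian, pick two non-commuting elements $a, b$; they are necessarily non-central and cannot satisfy $C_G(a)=C_G(b)$ (otherwise $b \in C_G(a)$ would force $ab=ba$), so the dichotomy gives $C_G(a) \cap C_G(b) = Z(G)$, whence $Z(G) \in 2-Cent(G)$. That $Z(G) \notin Cent(G)$ is Lemma \ref{lemm2.1}(1) when $Z(G) \neq 1$, and when $Z(G)=1$ it reduces to the trivial fact that every element centralizer is nontrivial (it contains the element itself).

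Finally the split into the two cases is immediate. If $Z(G) \neq 1$, Lemma \ref{lemm2.1}(2) gives $G \in 2-Cent(G)$ and Lemma \ref{lemm2.2}(1) gives $Cent(G) \subseteq 2-Cent(G)$, so $2-Cent(G) = Cent(G) \sqcup \{Z(G)\}$ and $|2-Cent(G)| = |Cent(G)| + 1$. If $Z(G) = 1$, then $G \notin 2-Cent(G)$ but $Cent(G) \setminus \{G\} \subseteq 2-Cent(G)$ by Lemma \ref{lemm2.2}(2), yielding $2-Cent(G) = (Cent(G) \setminus \{G\}) \sqcup \{Z(G)\}$ and $|2-Cent(G)| = |Cent(G)|$. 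The proof is mostly bookkeeping once the $CA$-dichotomy is invoked; the only subtle point is avoiding a double count of $Z(G)$ against the existing element centralizers, which is why $Z(G) \notin Cent(G)$ must be justified separately in each of the two cases.
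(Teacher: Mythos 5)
Your proposal is correct and follows essentially the same route as the paper: invoke the $CA$-dichotomy of Theorem \ref{thm1.2}(\ref{thm1.2(12)}) to show $2-Cent(G)$ is $Cent(G)$ together with the extra subgroup $Z(G)$ (with $G$ itself dropped when $Z(G)=1$), then count. If anything, you are a bit more careful than the paper in checking that $Z(G)$ is actually attained as an intersection and is distinct from every element centralizer.
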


\begin{proof}
Since $G$ is a $CA-$group,  by Theorem \ref{thm1.2}(\ref{thm1.2(12)}), for every $x, y \in G \setminus Z(G)$, $C_G(x) = C_G(y)$ or $C_G(x) \cap C_G(y) = Z(G)$. Suppose $|Cent(G)|=r$. Then there are $x_1, \ldots, x_{r-1} \in G\setminus Z(G)$ such that $Cent(G) = \{G, C_G(x_1), \ldots, C_G(x_{r-1})\}$. Consider the following  two cases:
\begin{enumerate}
\item $Z(G) = 1$. It is clear that for every $1 \leq i \leq r-1$, $C_G(\{x_i, e\}) = C_G(x_i)$. By above argument  $C_G(\{x_i, x_j\}) = C_G(x_i) \cap C_G(x_j) = Z(G)$, where $1 \leq i,j \leq r-1$ and $i \neq j$. Furthermore,  there is no $x, y \in G$ such that $x \neq y$ and $G = C_G(x) \cap C_G(y)$. Therefore, $2-Cent(G) = \{Z(G), C_G(x_1), \ldots, C_G(x_{r-1})\}$ which implies that  $|2-Cent(G)|= |Cent(G)|$.

\item $Z(G) \neq 1$. It is clear that for every $x \in Z(G)$, $C_G(\{x,e\}) = C_G(x) = G$. By a similar argument as Part (1),  $2-Cent(G)$ = $\{G, Z(G), C_G(x_1), \ldots, C_G(x_{r-1})\}$. Thus, $|2-Cent(G)| = |Cent(G)| +1$.
\end{enumerate}
Hence the result.
\end{proof}

\begin{rem}\label{rem2.7}
Let $G$ be a centerless $CA-$group. Then  $|Cent(G)|=|2-Cent(G)|$ and so  $G$ is primitive $n-$ and $(2,n)-$centralizer.  Furthermore, if $G$  and $\frac{G}{Z}$ are both $CA-$group and $Z_2(G)\neq Z(G)$, then $G$ is primitive $n-$centralizer if and only if  it is primitive $(2,n)-$centralizer.
\end{rem}

\begin{cor}\label{cor2.8}
Let $G$ be a $CA-$group. $G$ is $(2,n)-$centralizer if and only if $G$ satisfies one of the following conditions:
\begin{enumerate}
\item $Z(G) = 1$ and $G$ is a $n-$centralizer.
\item $Z(G) \neq 1$ and $G$ is a $(n-1)-$centralizer.
\end{enumerate}
\end{cor}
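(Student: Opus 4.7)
The plan is to derive this corollary as an immediate consequence of Theorem \ref{thm2.6}, which already exhibits the precise relationship between $|Cent(G)|$ and $|2-Cent(G)|$ for every $CA-$group. No further group-theoretic input is required; the argument reduces to a two-case translation between cardinalities, driven by whether the center is trivial.

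For the forward direction I would assume that $G$ is a $(2,n)-$centralizer $CA-$group, so $|2-Cent(G)| = n$, and split on whether $Z(G) = 1$. In the centerless case, Theorem \ref{thm2.6}(1) gives $|Cent(G)| = |2-Cent(G)| = n$, so $G$ is $n-$centralizer and condition (1) holds. In the case $Z(G) \neq 1$, Theorem \ref{thm2.6}(2) gives $|Cent(G)| = |2-Cent(G)| - 1 = n-1$, so $G$ is $(n-1)-$centralizer and condition (2) holds.

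For the converse I would run exactly the same dichotomy in reverse. If (1) holds, then $Z(G) = 1$ and $|Cent(G)| = n$, so Theorem \ref{thm2.6}(1) yields $|2-Cent(G)| = n$. If (2) holds, then $Z(G) \neq 1$ and $|Cent(G)| = n-1$, so Theorem \ref{thm2.6}(2) yields $|2-Cent(G)| = (n-1) + 1 = n$. In either case $G$ is $(2,n)-$centralizer, and the equivalence is complete.

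The only thing requiring care is bookkeeping: pairing the two branches of Theorem \ref{thm2.6} correctly with the two alternatives in the statement, and noting that the cases $Z(G) = 1$ and $Z(G) \neq 1$ are mutually exclusive, so the biconditional is unambiguous. Beyond this I do not anticipate any obstacle, since the corollary is essentially a restatement of Theorem \ref{thm2.6} phrased in terms of the parameter $n = |2-Cent(G)|$ rather than $|Cent(G)|$.
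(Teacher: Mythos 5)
Your proposal is correct and matches the paper's approach exactly: the paper simply states that the result follows from Theorem \ref{thm2.6}, and your two-case translation between $|Cent(G)|$ and $|2-Cent(G)|$ is precisely the intended (and only) content of that deduction.
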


\begin{proof}
The result follows from Theorem \ref{thm2.6}.
\end{proof}

\begin{thm}\label{thm2.9}
Let $H$ be an abelian group,  $p$ be a prime and $G = H \rtimes \mathbb{Z}_p$ is non-abelian. Then $|Z(G)| \mid |H|$, $|Cent(G)|=\frac{|H|}{|Z(G)|}+2$ and
$$|2-Cent(G)|=
\begin{cases}
|H|+2 & Z(G)=1, \\
\frac{|H|}{|Z(G)|}+3 & Z(G)\neq 1.
\end{cases}$$
\end{thm}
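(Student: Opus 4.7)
The plan is essentially to reduce this statement to results already established in the paper, so the main task is an assembly of known facts. The hypothesis provides that $H$ is an abelian normal subgroup of $G$ of prime index $p$, and $G$ is non-abelian; this immediately places $G$ under the hypotheses of Theorem~\ref{thm1.2}(\ref{thm1.2(11)}), which identifies $G$ as a $CA$-group. This structural observation is the hinge of the whole argument.

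Next I would extract the order relations. Theorem~\ref{thm1.2}(\ref{thm1.2(10)}) gives $|G| = p|G'||Z(G)|$, while by the semidirect product structure $|G| = p|H|$. Dividing these two equations yields $|H| = |G'||Z(G)|$, which at once proves the divisibility $|Z(G)| \mid |H|$ and supplies the identity $|G'| = |H|/|Z(G)|$. Feeding this into Theorem~\ref{thm1.2}(\ref{thm1.2(5)}), which states $|Cent(G)| = |G'|+2$ for any group having an abelian normal subgroup of prime index, gives exactly the asserted formula $|Cent(G)| = |H|/|Z(G)|+2$.

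For the computation of $|2-Cent(G)|$ I would invoke Theorem~\ref{thm2.6}, which splits precisely along the dichotomy appearing in the statement. Since $G$ is a $CA$-group: if $Z(G)=1$, then $|2-Cent(G)| = |Cent(G)| = |H|+2$ (using $|Z(G)|=1$ in the just-proved formula); if $Z(G)\neq 1$, then $|2-Cent(G)| = |Cent(G)|+1 = |H|/|Z(G)|+3$. These match the two branches of the piecewise expression.

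The only things worth double-checking along the way are that the hypotheses of each cited result are genuinely met: non-abelianness of $G$ is needed for Theorems~\ref{thm1.2}(\ref{thm1.2(5)}), \ref{thm1.2}(\ref{thm1.2(10)}) and \ref{thm1.2}(\ref{thm1.2(11)}), and is given by assumption; the abelian normal subgroup of prime index is $H$ itself, exactly as those theorems require; and Theorem~\ref{thm2.6} is applied to the $CA$-group $G$ already produced in the first step. There is no substantive obstacle — the proof is a three-line chain of invocations — so the only risk is citing the wrong item from the long list in Theorem~\ref{thm1.2}, which is easily avoided by naming the results explicitly as above.
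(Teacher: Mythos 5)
Your proposal is correct and follows essentially the same route as the paper: it cites Theorem~\ref{thm1.2}(\ref{thm1.2(5)}) for $|Cent(G)|=|G'|+2$, Theorem~\ref{thm1.2}(\ref{thm1.2(10)}) for $|G|=p|G'||Z(G)|$ (hence $|G'|=|H|/|Z(G)|$ and the divisibility), Theorem~\ref{thm1.2}(\ref{thm1.2(11)}) to see that $G$ is a $CA$-group, and Theorem~\ref{thm2.6} for the two-case formula for $|2-Cent(G)|$. The only difference is that you spell out the derivation of $|G'|=|H|/|Z(G)|$ slightly more explicitly than the paper does.
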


\begin{proof}
Since $G$ has an abelian normal subgroup of prime index, by Theorem \ref{thm1.2}(\ref{thm1.2(5)}), $|Cent(G)|=|G'|+2$. On the other hand, by Theorem \ref{thm1.2}(\ref{thm1.2(10)}), $|G|=p|G'||Z(G)|$ and  $|G'|=\frac{|H|}{|Z(G)|}$. Hence, $|Cent(G)|=\frac{|H|}{|Z(G)|}+2$. Moreover, since  $G$ has an abelian normal subgroup of prime index, by Theorem \ref{thm1.2}(\ref{thm1.2(11)}), $G$ is a $CA-$group. We now apply Theorem \ref{thm2.6} to deduce that
\[|2-Cent(G)|=
\begin{cases}
|H|+2 & Z(G)=1, \\
\frac{|H|}{|Z(G)|}+3 & Z(G)\neq 1,
\end{cases}\]
that completes the proof.
\end{proof}

\begin{cor}\label{cor2.10}
Suppose $\frac{G}{Z(G)}\cong \mathbb{Z}_n\rtimes \mathbb{Z}_p$ is non-abelian, $n$ is a positive integer and  $p$ is a prime number. Then,
\begin{enumerate}
\item If $Z(G)=1$, then $|Cent(\mathbb{Z}_n\rtimes \mathbb{Z}_p)|=|2-Cent(\mathbb{Z}_n\rtimes \mathbb{Z}_p)|=n+2$.

\item Suppose $Z(G)\neq1$.
\begin{enumerate}
\item If $Z_2(G)=Z(G)$ then $|2-Cent(G)|-1 = |Cent(G)| = |Cent(\frac{G}{Z(G)})| = |2-Cent(\frac{G}{Z(G)})| =  n+2$.

\item If $Z_2(G)\neq Z(G)$ then $|2-Cent(G)|-1 = |Cent(G)|=n+2$ and $|2-Cent(\frac{G}{Z(G)})|-1 = |Cent(\frac{G}{Z(G)})|=\frac{n}{|Z(\frac{G}{Z(G)})|}+2$.
\end{enumerate}
\end{enumerate}
\end{cor}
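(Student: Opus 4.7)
The plan is mostly an application of Theorems \ref{thm1.2}(\ref{thm1.2(7)}), \ref{thm1.2}(\ref{thm1.2(8)}), \ref{thm2.6}, and \ref{thm2.9} to both $G$ and its central quotient $G/Z(G)$. The general scheme is: Theorem \ref{thm1.2}(\ref{thm1.2(7)}) gives $|Cent(G)|=n+2$ directly from the hypothesis, Theorem \ref{thm1.2}(\ref{thm1.2(8)}) makes $G$ a $CA$-group, and Theorem \ref{thm2.6} then upgrades $|Cent|$ to $|2-Cent|$, adding $1$ exactly when the center is nontrivial. The same three statements apply verbatim to $G/Z(G)\cong \mathbb{Z}_n\rtimes \mathbb{Z}_p$, whose structure as a non-abelian semidirect product $H\rtimes \mathbb{Z}_p$ with $H=\mathbb{Z}_n$ abelian is known in full.

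For Part (1), $Z(G)=1$ forces $G\cong G/Z(G)\cong \mathbb{Z}_n\rtimes \mathbb{Z}_p$, so either the chain Theorem \ref{thm1.2}(\ref{thm1.2(7)})--Theorem \ref{thm2.6} or a direct application of Theorem \ref{thm2.9} (with $|Z(G)|=1$) yields $|Cent(G)|=|2-Cent(G)|=n+2$ at once. For Part (2), where $Z(G)\neq 1$, I would first handle the $G$-side: the same chain gives $|Cent(G)|=n+2$ and, since the center is now nontrivial, $|2-Cent(G)|=n+3$; this is the common $G$-side of both subcases. The remaining work in Part (2) is to evaluate $|Cent(G/Z(G))|$ and $|2-Cent(G/Z(G))|$, where the two subcases diverge precisely because $Z(G/Z(G))=Z_2(G)/Z(G)$.

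In subcase (2a), the hypothesis $Z_2(G)=Z(G)$ says that $G/Z(G)$ is centerless, and applying Part (1) to the group $G/Z(G)$ gives $|Cent(G/Z(G))|=|2-Cent(G/Z(G))|=n+2$. In subcase (2b), the quotient has nontrivial center, so I would apply Theorem \ref{thm2.9} to $G/Z(G)$ viewed as $\mathbb{Z}_n\rtimes \mathbb{Z}_p$ with $H=\mathbb{Z}_n$, obtaining $|Cent(G/Z(G))|=n/|Z(G/Z(G))|+2$ and $|2-Cent(G/Z(G))|=n/|Z(G/Z(G))|+3$. No deep obstacle appears; the argument is a careful match between which previous theorem feeds which quantity, and the only thing to verify is that the hypotheses of Theorems \ref{thm2.6} and \ref{thm2.9} transfer cleanly to $G/Z(G)$, which is automatic from the isomorphism $G/Z(G)\cong \mathbb{Z}_n\rtimes \mathbb{Z}_p$ together with the fact that such a non-abelian semidirect product is itself a $CA$-group.
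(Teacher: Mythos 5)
Your proposal is correct and follows essentially the same route as the paper: Theorem \ref{thm1.2}(\ref{thm1.2(7)}) and \ref{thm1.2}(\ref{thm1.2(8)}) handle the $G$-side, Theorem \ref{thm2.6} converts $|Cent|$ to $|2-Cent|$, and Theorem \ref{thm2.9} (applied to $\frac{G}{Z(G)}=\mathbb{Z}_n\rtimes\mathbb{Z}_p$ with $H=\mathbb{Z}_n$) settles the quotient side, with the two subcases distinguished by $Z(\frac{G}{Z(G)})=\frac{Z_2(G)}{Z(G)}$ exactly as in the paper's argument.
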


\begin{proof}
Since  $\frac{G}{Z(G)}\cong \mathbb{Z}_n\rtimes \mathbb{Z}_p$, it has an abelian normal subgroup of prime index and so by Theorem \ref{thm1.2}(\ref{thm1.2(11)}), $\frac{G}{Z(G)}$ is a $CA-$group. Since $\frac{G}{Z(G)}\cong \mathbb{Z}_n\rtimes \mathbb{Z}_p$ is non-abelian and  by Theorem \ref{thm1.2}(\ref{thm1.2(8)}),  $G$ is a $CA-$group.
\begin{enumerate}
\item Suppose $Z(G)=1$. By Remark \ref{rem2.7}, $|Cent(\mathbb{Z}_n\rtimes \mathbb{Z}_p)| = |2-Cent(\mathbb{Z}_n\rtimes \mathbb{Z}_p)|$ and by Theorem \ref{thm1.2}(\ref{thm1.2(7)}), $|Cent(\mathbb{Z}_n\rtimes \mathbb{Z}_p)| = n+2$. Hence the result.

\item Suppose $Z(G) \neq 1$.
\begin{enumerate}
\item If $Z_2(G) = Z(G)$ then $Z(\frac{G}{Z(G)}) = 1$. So by Theorems \ref{thm2.6}(1) and \ref{thm2.9}, $|Cent(\frac{G}{Z(G)})| = |2-Cent(\frac{G}{Z(G)})| = n+2$ and by Theorems \ref{thm2.6}(2) and \ref{thm1.2}(\ref{thm1.2(7)}), $|2-Cent(G)|-1 = |Cent(G)| = n+2$.

\item If $Z_2(G) \neq Z(G)$, then $Z(\frac{G}{Z(G)}) \neq 1$ and by Theorems \ref{thm2.6}(2) and \ref{thm1.2}(\ref{thm1.2(7)}), $|2-Cent(G)|-1 = |Cent(G)|=n+2$. Now by Theorems \ref{thm2.6}(2) and \ref{thm2.9}, $|2-Cent(\mathbb{Z}_n\rtimes \mathbb{Z}_p)|-1 = |Cent(\mathbb{Z}_n\rtimes \mathbb{Z}_p)|=\frac{n}{|Z(\mathbb{Z}_n\rtimes \mathbb{Z}_p)|}+2$.
\end{enumerate}
\end{enumerate}
This completes the proof.
\end{proof}

\begin{cor}\label{cor2.11}
Let $G$ be a group such that $Z(G)\neq1$, $\frac{G}{Z(G)} \cong \mathbb{Z}_n\rtimes \mathbb{Z}_p$ is non-abelian, $n$ is a positive integer and $p$ is prime. Then $G$ is primitive $n-$centralizer if and only if $Z_2(G)=Z(G)$.
\end{cor}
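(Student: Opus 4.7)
The plan is to read off the conclusion directly from Corollary~\ref{cor2.10}(2). By the definition of primitive $n$-centralizer given in the Introduction, $G$ is primitive $n$-centralizer precisely when $|Cent(G)| = |Cent(G/Z(G))|$ (with the common value being $n$). Under the present hypotheses we sit in case (2) of Corollary~\ref{cor2.10}, which has already explicitly computed both cardinalities in each of the subcases (2)(a) and (2)(b); so the whole proof reduces to comparing those two numbers.

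For the ``if'' direction I would simply invoke subcase (2)(a): assuming $Z_2(G) = Z(G)$, Corollary~\ref{cor2.10}(2)(a) immediately gives $|Cent(G)| = |Cent(G/Z(G))| = n+2$, so $G$ is primitive (specifically primitive $(n+2)$-centralizer in the common-value sense).

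For the ``only if'' direction I would argue the contrapositive. Assume $Z_2(G) \neq Z(G)$; then $Z(G/Z(G)) = Z_2(G)/Z(G)$ is nontrivial, so $|Z(G/Z(G))| \geq 2$. Subcase (2)(b) of Corollary~\ref{cor2.10} then yields $|Cent(G)| = n+2$ while $|Cent(G/Z(G))| = n/|Z(G/Z(G))| + 2$. Since $G/Z(G) \cong \mathbb{Z}_n \rtimes \mathbb{Z}_p$ is non-abelian we have $n \geq 3$, and together with $|Z(G/Z(G))| \geq 2$ this forces $n/|Z(G/Z(G))| \leq n/2 < n$. Hence $|Cent(G)| \neq |Cent(G/Z(G))|$, and $G$ fails to be primitive.

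I do not foresee any real obstacle: the statement is essentially a dictionary lookup on top of Corollary~\ref{cor2.10}. The only point worth flagging is the strict inequality $n/|Z(G/Z(G))| < n$ used in the contrapositive, which follows at once from $|Z(G/Z(G))| \geq 2$ --- and this nontriviality is exactly the translation of the hypothesis $Z_2(G) \neq Z(G)$ into the language of the quotient $G/Z(G)$.
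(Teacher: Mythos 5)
Your proof is correct and takes essentially the same route as the paper: the forward direction is exactly the appeal to Corollary~\ref{cor2.10}(2a), and your contrapositive argument for the converse is just the paper's direct comparison of $|Cent(G)| = n+2$ with $|Cent(G/Z(G))| = \frac{n}{|Z(G/Z(G))|}+2$ (via Theorem~\ref{thm2.9} and Theorem~\ref{thm1.2}(\ref{thm1.2(7)})) rephrased. The extra observation that $n\geq 3$ is not needed --- $n\geq 1$ and $|Z(G/Z(G))|\geq 2$ already give the strict inequality --- but it does no harm.
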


\begin{proof}
If  $Z_2(G)=Z(G)$, then by Corollary \ref{cor2.10}(2a), $G$ is  primitive $n-$centralizer. Conversely we assume that $G$ is  primitive $n-$centralizer. Then $|Cent(G)|=|Cent(\frac{G}{Z(G)})|$. On the other hand, by Theorems \ref{thm2.9} and \ref{thm1.2}(\ref{thm1.2(7)}), $n+2=\frac{n}{|Z(\frac{G}{Z(G)})|}+2$. Thus $|Z(\frac{G}{Z(G)})|=1$ and so $Z_2(G) = Z(G)$.
\end{proof}

\begin{cor}\label{cor2.12}
Let $G$ be a group with $\frac{G}{Z(G)} \cong \mathbb{Z}_n\rtimes \mathbb{Z}_p$ is non-abelian, $n$ is a positive integer and $p$ is prime. Then $G$ is primitive $(2,n)-$centralizer if and only if $Z(G)=1$.
\end{cor}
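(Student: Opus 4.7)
The plan is to reduce the statement directly to Corollary \ref{cor2.10}, which already computes both $|2-Cent(G)|$ and $|2-Cent(G/Z(G))|$ in each of the three possible configurations for $(Z(G),Z_2(G))$, and then check in which configuration the two quantities coincide.

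For the backward direction, the argument is immediate: if $Z(G)=1$, then $G\cong G/Z(G)$ as groups, so trivially $|2-Cent(G)|=|2-Cent(G/Z(G))|$ and $G$ is primitive $(2,n)$-centralizer. Note that Corollary \ref{cor2.10}(1) also gives the common value $n+2$.

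For the forward direction, I would argue by contraposition and assume $Z(G)\neq 1$. Two subcases arise according to Corollary \ref{cor2.10}(2). If $Z_2(G)=Z(G)$, then part (2a) yields $|2-Cent(G)|=n+3$ while $|2-Cent(G/Z(G))|=n+2$, so these counts differ by $1$ and $G$ is not primitive $(2,n)$-centralizer. If instead $Z_2(G)\neq Z(G)$, then part (2b) gives
\[
|2-Cent(G)|=n+3,\qquad |2-Cent(G/Z(G))|=\frac{n}{|Z(G/Z(G))|}+3.
\]
Equality of the two values would force $|Z(G/Z(G))|=1$, i.e.\ $Z_2(G)=Z(G)$, contradicting the subcase hypothesis. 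Hence $G$ fails to be primitive $(2,n)$-centralizer in both subcases, so $Z(G)\neq 1$ is incompatible with primitivity.

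There is no real obstacle here: the heavy lifting (recognising that $G$ and $G/Z(G)$ are both $CA$-groups, computing the number of element centralizers of $\mathbb{Z}_n\rtimes\mathbb{Z}_p$, and converting from $Cent$ to $2-Cent$ via Theorem \ref{thm2.6}) has already been packaged into Corollary \ref{cor2.10}. The only care required is to match, in each of the three cases, the formula for $|2-Cent(G)|$ against the formula for $|2-Cent(G/Z(G))|$ and observe that equality holds exclusively when $Z(G)=1$.
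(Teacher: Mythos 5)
Your proposal is correct and follows essentially the same route as the paper: both directions are handled by reading off the values of $|2-Cent(G)|$ and $|2-Cent(\frac{G}{Z(G)})|$ from Corollary \ref{cor2.10} in the two subcases $Z_2(G)=Z(G)$ and $Z_2(G)\neq Z(G)$ and observing that they can only agree when $Z(G)=1$. The paper phrases the case split as $Z(\frac{G}{Z(G)})=1$ versus $Z(\frac{G}{Z(G)})\neq 1$, which is equivalent to yours, so there is nothing substantive to add.
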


\begin{proof}
If $Z(G)=1$, then obviously $G$ is primitive $(2,n)-$centralizer. Conversely, we assume that $G$ is a primitive $(2,n)-$centralizer. Hence $|2-Cent(G)|=|2-Cent(\frac{G}{Z(G)})|$.  Suppose $Z(G)\neq 1$. Then by Corollary \ref{cor2.10}, both $G$ and $\frac{G}{Z}$ are $CA-$group.  We now consider the following two cases:
\begin{enumerate}
\item $Z(\frac{G}{Z(G)})=1$. By Theorem \ref{thm2.6} and Corollary \ref{cor2.10}(2a), $|2-Cent(\frac{G}{Z(G)})| = |Cent(\frac{G}{Z(G)})| = n+2$. Since $Z(G) \neq 1$,  by Theorems \ref{thm2.6} and \ref{thm1.2}(\ref{thm1.2(7)}), $|2-Cent(G)|=|Cent(G)|+1 = n+3$ and by our assumption $|2-Cent(G)|=|2-Cent(\frac{G}{Z(G)})|$. Thus, $n+3=n+2$, which is impossible.

\item $Z(\frac{G}{Z(G)}) \neq 1$. By Theorem \ref{thm2.6} and Corollary \ref{cor2.10}(2b), $|2-Cent(\frac{G}{Z(G)})| -1 = |Cent(\frac{G}{Z(G)})| = \frac{n}{|Z(\frac{G}{Z(G)})|}+2$. Since $Z(G) \neq 1$,  by Theorems \ref{thm2.6} and \ref{thm1.2}(\ref{thm1.2(7)}), $|2-Cent(G)|=|Cent(G)|+1 = n+3$. Now by our assumption, $|2-Cent(G)|=|2-Cent(\frac{G}{Z(G)})|$. Thus, $n+3=\frac{n}{|Z(\frac{G}{Z(G)})|}+3$ and so $Z(\frac{G}{Z(G)})=1$ which is impossible.
\end{enumerate}
Hence $G$ is centerless and the proof is complete.
\end{proof}

\begin{thm}\label{thm2.13}
Suppose $G$ is a finite non-abelian group and $r$ is the maximum size of a set of mutually non-commuting elements in $G$. Then $G$ is a $CA-$group if and only if
\begin{eqnarray*}
|2-Cent(G)| = \left\lbrace
\begin{array}{ll}
r+1 & Z(G) = 1 \\
r+2 & Z(G) \neq 1
\end{array}
\right..
\end{eqnarray*}
\end{thm}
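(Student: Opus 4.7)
The plan is to reduce the statement to two results already at hand: Theorem \ref{thm1.2}(\ref{thm1.2(3)}), which characterizes $CA$-groups as precisely those non-abelian groups with $|Cent(G)| = r + 1$ (since a proper centralizer $C_G(x) \neq G$ is exactly a centralizer of a non-central element, so ``every proper centralizer is abelian'' coincides with the $CA$ condition), and Theorem \ref{thm2.6}, which converts between $|Cent(G)|$ and $|2\text{-}Cent(G)|$ for $CA$-groups.

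For the forward direction I would assume $G$ is a $CA$-group and invoke Theorem \ref{thm1.2}(\ref{thm1.2(3)}) to conclude $|Cent(G)| = r+1$. Then Theorem \ref{thm2.6} yields $|2\text{-}Cent(G)| = r+1$ when $Z(G) = 1$ and $|2\text{-}Cent(G)| = (r+1) + 1 = r+2$ when $Z(G) \neq 1$, which is exactly the claimed formula.

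For the converse, I would handle the two cases of the piecewise formula separately. If $Z(G)=1$ and $|2\text{-}Cent(G)| = r+1$, then Lemma \ref{lemm2.2}(2) gives $|Cent(G)| \leq |2\text{-}Cent(G)| = r+1$, while Theorem \ref{thm1.2}(\ref{thm1.2(1b)}) guarantees $|Cent(G)| \geq r+1$, forcing equality. If $Z(G) \neq 1$ and $|2\text{-}Cent(G)| = r+2$, then Lemma \ref{lemm2.2}(1) yields the strict inequality $|Cent(G)| < |2\text{-}Cent(G)| = r+2$, hence $|Cent(G)| \leq r+1$, and combining with $|Cent(G)| \geq r+1$ again gives $|Cent(G)| = r+1$. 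In either case, Theorem \ref{thm1.2}(\ref{thm1.2(3)}) then concludes that every proper centralizer of $G$ is abelian, i.e., $G$ is a $CA$-group.

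No step poses a genuine obstacle: the whole argument is a packaging exercise combining the $|Cent(G)|$-characterization of $CA$-groups with the shift-by-$\delta(G)$ relation between $Cent$ and $2\text{-}Cent$ established earlier. The only point that requires a moment of care is the identification of ``proper centralizer is abelian'' with the $CA$-condition, and the careful use of strict versus non-strict inequality in Lemma \ref{lemm2.2} depending on whether $Z(G)$ is trivial; both are handled cleanly by the case split above.
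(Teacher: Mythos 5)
Your proposal is correct and follows essentially the same route as the paper's own proof: the forward direction via Theorem \ref{thm1.2}(\ref{thm1.2(3)}) and Theorem \ref{thm2.6}, and the converse by combining the lower bound $r+1 \leq |Cent(G)|$ from Theorem \ref{thm1.2}(\ref{thm1.2(1b)}) with the (strict or non-strict) upper bound from Lemma \ref{lemm2.2} in the two cases on $Z(G)$.
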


\begin{proof}
Suppose $G$ is a $CA-$group. Then by Theorem \ref{thm1.2}(\ref{thm1.2(3)}), $|Cent(G)| = r+1$. On the other hand, by Theorem \ref{thm2.6},
\begin{eqnarray*}
|2-Cent(G)| = \left\lbrace
\begin{array}{ll}
|Cent(G)| & Z(G) = 1 \\
|Cent(G)| + 1 & Z(G) \neq 1
\end{array}
\right.,
\end{eqnarray*}
and hence,
 \begin{eqnarray*}
|2-Cent(G)| = \left\lbrace
\begin{array}{ll}
r+1 & Z(G) = 1 \\
r+2 & Z(G) \neq 1
\end{array}
\right..
\end{eqnarray*}

Conversely, we assume that
\begin{eqnarray*}
|2-Cent(G)| = \left\lbrace
\begin{array}{ll}
r+1 & Z(G) = 1 \\
r+2 & Z(G) \neq 1
\end{array}
\right..
\end{eqnarray*}
By Theorem \ref{thm1.2}(\ref{thm1.2(1)}), $r +1 \leq |Cent(G)|$. We now consider the following cases:
\begin{enumerate}
\item  $Z(G) = 1$. By Lemma \ref{lemm2.2}(2) and our assumption $|Cent(G)| \leq |2-Cent(G)| = r+1$. Thus $|Cent(G)| = r+1$ and by Theorem \ref{thm1.2}(\ref{thm1.2(3)}), $G$ is a $CA-$group.

\item  $Z(G) \neq 1$. By Lemma \ref{lemm2.2}(1) and our assumption $|Cent(G)| < |2-Cent(G)| = r+2$. Hence $|Cent(G)| = r+1$ and by Theorem \ref{thm1.2}(\ref{thm1.2(3)}), $G$ is a $CA-$group.
\end{enumerate}
This completes our argument.
\end{proof}

\section{Groups with at most Nine $2-$Element Centralizers}

The aim of this section is to characterize finite groups with at most nine $2-$element centralizers. One can easily seen that a group $G$ is $(2,1)-$centralizer if and only if $G$ is abelian which is similar to the case of $n-$centralizer groups. Also, there is no $(2,2)-$ and $(2,3)-$centralizer groups. In what follows it is also proved that there is no $(2,4)-$centralizer groups.

\begin{thm}\label{thm2.14}
There is no $(2,4)-$centralizer groups.
\end{thm}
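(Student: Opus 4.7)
The plan is to assume for contradiction that $G$ is a $(2,4)$-centralizer group and derive a contradiction by forcing $G$ to be abelian, which would give $|2\text{-}Cent(G)|\leq 1$. Since an abelian group has $|2\text{-}Cent(G)|\leq 1$, the group $G$ must be non-abelian, so Theorem \ref{thm1.2}(\ref{thm1.2(1)}) applies and yields $|Cent(G)|\geq r+1\geq 4$.

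Next I would split into cases according to whether the center is trivial. If $Z(G)\neq 1$, Lemma \ref{lemm2.2}(1) gives the strict inequality $|Cent(G)|<|2\text{-}Cent(G)|=4$, forcing $|Cent(G)|\leq 3$. This directly contradicts $|Cent(G)|\geq 4$ obtained above (and also contradicts the well-known non-existence of $2$- and $3$-centralizer groups noted in the introduction). If instead $Z(G)=1$, Lemma \ref{lemm2.2}(2) only gives $|Cent(G)|\leq 4$; combined with $|Cent(G)|\geq 4$ this pins down $|Cent(G)|=4$ exactly.

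The case $Z(G)=1$ and $|Cent(G)|=4$ is then handled by Theorem \ref{thm1.1}(1), which characterizes the $4$-centralizer groups as precisely those with $G/Z(G)\cong \mathbb{Z}_2\times\mathbb{Z}_2$. But $Z(G)=1$ would force $G\cong \mathbb{Z}_2\times\mathbb{Z}_2$, which is abelian and hence satisfies $|Cent(G)|=1$, a contradiction. Hence no $(2,4)$-centralizer group exists.

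There is no genuine obstacle here: the entire argument is a short combination of Lemma \ref{lemm2.2}, the bound $|Cent(G)|\geq r+1\geq 4$ from Theorem \ref{thm1.2}(\ref{thm1.2(1)}), and the classification of $4$-centralizer groups in Theorem \ref{thm1.1}(1). The only mild subtlety worth stating explicitly is the observation that abelian groups contribute at most one $2$-element centralizer (namely $G$ itself), since for any distinct $x,y$ in an abelian group $C_G(\{x,y\})=G$; this is what lets us immediately assume $G$ is non-abelian at the outset.
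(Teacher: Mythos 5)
Your proof is correct and follows essentially the same route as the paper: the same case split on whether $Z(G)$ is trivial, the same use of Lemma \ref{lemm2.2} to bound $|Cent(G)|$, and the same appeal to Theorem \ref{thm1.1}(1) to rule out the centerless case. The only difference is cosmetic — you justify $|Cent(G)|\geq 4$ explicitly via Theorem \ref{thm1.2}(\ref{thm1.2(1)}) where the paper simply asserts ``always $|Cent(G)|>3$.''
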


\begin{proof}
Suppose $G$ is a $(2,4)-$centralizer group. We will consider two cases as follows:
\begin{enumerate}
\item \textit{$G$ is not centerless}. By Lemma \ref{lemm2.2}(1), $|Cent(G)| < |2-Cent(G)| = 4$. So $|Cent(G)| \leq 3$. On the other hand always $|Cent(G)| > 3$. This is impossible.

\item \textit{$G$ is centerless}. By Lemma \ref{lemm2.2}(2), $|Cent(G)| \leq |2-Cent(G)| = 4$ and since $|Cent(G)| > 3$, $|Cent(G)| = 4$. Then by Theorem \ref{thm1.1}(1), $G \cong \frac{G}{Z(G)} \cong \mathbb{Z}_2 \times \mathbb{Z}_2$, a contradiction.
\end{enumerate}
Hence there is no $(2,4)-$centralizer groups.
\end{proof}

\begin{lem}\label{lemm2.15}
If $|Cent(G)| = 6$ and $\frac{G}{Z(G)} \cong \mathbb{Z}_2 \times \mathbb{Z}_2 \times \mathbb{Z}_2 \times \mathbb{Z}_2$. Then $G$ is a $CA-$group.
\end{lem}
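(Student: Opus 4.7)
The plan is to reduce the lemma to a direct application of Theorem \ref{thm1.2}(\ref{thm1.2(3)}), which says that every proper centralizer of a non-abelian group $G$ is abelian if and only if $|Cent(G)| = r+1$, where $r$ is the maximum size of a set of pairwise non-commuting elements of $G$. Since $G/Z(G) \cong \mathbb{Z}_2^4$ is non-trivial and non-cyclic, $G$ is non-abelian, so the results in Theorem \ref{thm1.2}(\ref{thm1.2(1)}) apply.

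The key step is to show that $r = 5$. First, by Theorem \ref{thm1.2}(\ref{thm1.2(1a)}) one has $r \geq 3$, and by Theorem \ref{thm1.2}(\ref{thm1.2(1b)}) one has $r+1 \leq |Cent(G)| = 6$, so $r \in \{3,4,5\}$. The cases $r = 3$ and $r = 4$ are ruled out immediately by Theorem \ref{thm1.2}(\ref{thm1.2(1c)}) and \ref{thm1.2}(\ref{thm1.2(1d)}), since these would force $|Cent(G)|$ to equal $4$ or $5$ respectively, contradicting $|Cent(G)| = 6$. Hence $r = 5$ and $|Cent(G)| = r+1$.

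Applying Theorem \ref{thm1.2}(\ref{thm1.2(3)}), every proper centralizer $C_G(x)$ (i.e., every centralizer of a non-central element) is abelian. By definition this means $G$ is a $CA$-group, completing the proof.

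There is essentially no obstacle: the hypothesis $G/Z(G) \cong \mathbb{Z}_2^4$ is only needed to guarantee that $G$ is non-abelian so that the tools from Theorem \ref{thm1.2}(\ref{thm1.2(1)}) are available; the actual computation is just the elimination of the two small values of $r$ from the chain $3 \leq r \leq 5$.
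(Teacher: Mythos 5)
Your proposal is correct and follows essentially the same route as the paper: both arguments pin down $r=5$ by combining Theorem \ref{thm1.2}(\ref{thm1.2(1a)}), (\ref{thm1.2(1b)}) with the elimination of $r=3,4$ via (\ref{thm1.2(1c)}) and (\ref{thm1.2(1d)}). The only difference is the closing citation --- the paper finishes with Theorem \ref{thm1.2}(\ref{thm1.2(2a)}) using $|Cent(G)|<r+4$, whereas you invoke Theorem \ref{thm1.2}(\ref{thm1.2(3)}) from $|Cent(G)|=r+1$, which is if anything the more direct of the two.
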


\begin{proof}
Suppose $\{x_1,\ldots, x_r\}$ is a set of pairwise non-abelian elements of $G$ with maximal size. So, by Theorem \ref{thm1.2}(\ref{thm1.2(1b)}), $r+1 \leq |Cent(G)| = 6$. Thus $r \leq 5$. On the other hand, by Theorem \ref{thm1.2}(\ref{thm1.2(1a)}), $r \geq 3$. This shows that $3 \leq r \leq 5$. If $r=3$, then by \ref{thm1.2}(\ref{thm1.2(1c)}) we have $|Cent(G)|=4$, which is a contradiction. If $r=4$, then by \ref{thm1.2}(\ref{thm1.2(1d)}), $|Cent(G)| = 5$ which leads to another contradiction. Therefore, $r=5$ and $Cent(G) = \{G, C_G(x_1), \ldots, C_G(x_5)\}$. Since $6 = |Cent(G)| < r+4 = 9$, by Theorem \ref{thm1.2}(\ref{thm1.2(2a)}) we have $C_G(x)$ is abelian, where $x \in G\setminus Z(G)$ is arbitrary. Thus $G$ is a $CA-$group, as desired.
\end{proof}
\begin{cor}\label{thm2.16}
Suppose $G$ is an $n-$centralizer with $n \leq 9$. Then $G$ is a $CA-$group.
\end{cor}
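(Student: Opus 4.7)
My plan is to invoke the classification of $G/Z(G)$ for $n$-centralizer groups with $n \le 9$ given in Theorem \ref{thm1.1} and verify the $CA$-property in each case. The case $n=1$ is immediate, and no $n$-centralizer groups exist for $n \in \{2,3\}$, so it suffices to handle each isomorphism class of $G/Z(G)$ appearing in Theorem \ref{thm1.1}. Throughout I will use the elementary observation that if $C_G(x)/Z(G)$ is cyclic for a non-central $x$, then $C_G(x)$ is generated by $Z(G)$ and a single coset representative and is therefore abelian; since $C_G(x)/Z(G) \subseteq C_{G/Z(G)}(\bar x)$, it often suffices to analyze centralizers in the quotient.

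The possibilities for $G/Z(G)$ split into five types. (a)~If $|G/Z(G)| = p^2$ (the cases $\mathbb{Z}_2^2, \mathbb{Z}_3^2, \mathbb{Z}_5^2, \mathbb{Z}_7^2$), the quotient is abelian and for non-central $x$ the subgroup $C_G(x)/Z(G)$ is proper and non-trivial, hence cyclic of order $p$. (b)~If $|G/Z(G)|$ is a product of three primes counted with multiplicity (namely $\mathbb{Z}_2^3$ and $D_8$ of order $8$; $A_4$ and $D_{12}$ of order $12$; $Hol(\mathbb{Z}_7)$ of order $42$), Theorem \ref{thm1.2}(\ref{thm1.2(4)}) applies directly. (c)~If $G/Z(G)$ is non-abelian and isomorphic to some $\mathbb{Z}_m \rtimes \mathbb{Z}_p$ with $p$ prime (covering $S_3, D_{10}, D_{14}$, and the non-abelian group of order $21$), Theorem \ref{thm1.2}(\ref{thm1.2(8)}) applies. (d)~If $G/Z(G) \cong \mathbb{Z}_2^4$, Lemma \ref{lemm2.15} applies. (e)~The only remaining case is $G/Z(G) \cong R$, which arises for $n = 7$.

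For case (e) I will verify by direct computation inside $R = \langle x, y \mid x^5 = y^4 = 1,\ xy = yx^3\rangle$ that every non-identity element has cyclic centralizer: the non-identity powers of $x$ have centralizer $\langle x\rangle \cong \mathbb{Z}_5$, each non-trivial power of $y$ has centralizer $\langle y\rangle \cong \mathbb{Z}_4$, and each element of order $4$ outside $\langle y\rangle$ generates a cyclic subgroup of order $4$ which is its own centralizer. Consequently $C_{R}(\bar x)$ is cyclic for every non-identity $\bar x$, and the elementary observation yields $C_G(x)$ abelian for every non-central $x$.

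Running through Theorem \ref{thm1.1} confirms that every listed $G/Z(G)$ falls into exactly one of the types (a)--(e), so $G$ is $CA$. The main obstacle is case (e): this quotient is not covered by the general criteria in Theorem \ref{thm1.2} and requires the explicit centralizer computation inside the order-$20$ group $R$; the other four cases are settled by quoting Theorem \ref{thm1.2}, Lemma \ref{lemm2.15}, or an immediate order argument.
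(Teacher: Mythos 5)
Your proof is correct and follows the same overall strategy as the paper: run through the classification of $G/Z(G)$ in Theorem \ref{thm1.1} and verify the $CA$-property case by case. The paper's own proof is a one-line citation of Theorem \ref{thm1.2}(\ref{thm1.2(4)}) and Lemma \ref{lemm2.15}, so your cases (b) and (d) match it exactly, while your cases (a) and (c) supply details the paper leaves implicit (the quotients of order $p^2$ or $pq$ are not literally covered by the ``$pqr$'' lemma as stated, and your cyclic-quotient argument, respectively your appeal to Theorem \ref{thm1.2}(\ref{thm1.2(8)}), fills that in cleanly). One small correction: your case (e) is unnecessary and its framing is wrong --- $R$ has order $20 = 2\cdot 2\cdot 5$, a product of three primes, so it \emph{is} covered by Theorem \ref{thm1.2}(\ref{thm1.2(4)}) and belongs in your case (b); your explicit computation of the centralizers in $R$ is nevertheless valid, so this is a redundancy rather than a gap.
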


\begin{proof}
The proof follows from Theorems \ref{thm1.1}, \ref{thm1.2}(\ref{thm1.2(4)}) and Lemma \ref{lemm2.15}.
\end{proof}
\begin{lem}\label{thm2.17}
Suppose $G$ is a $(2,n)-$centralizer group with  $n \leq 9$. Then $G$ is a $CA-$group.
\end{lem}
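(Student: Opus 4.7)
The plan is to push the hypothesis $|2-Cent(G)| = n \leq 9$ through the inequalities of Lemma~\ref{lemm2.2} in order to bound $|Cent(G)|$, and then to invoke Corollary~\ref{thm2.16}, which already establishes that every $m$-centralizer group with $m \leq 9$ is a $CA-$group.

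Concretely, I would case-split on the centre. If $Z(G) \neq 1$, then Lemma~\ref{lemm2.2}(1) gives the strict inequality $|Cent(G)| < |2-Cent(G)| = n \leq 9$, so $|Cent(G)| \leq 8$. If $Z(G) = 1$, then Lemma~\ref{lemm2.2}(2) gives $|Cent(G)| \leq |2-Cent(G)| = n \leq 9$. In either situation $|Cent(G)|$ is a positive integer not exceeding $9$, so Corollary~\ref{thm2.16} forces $G$ to be a $CA-$group.

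There is essentially no genuine obstacle in this argument: Lemma~\ref{lemm2.2} is precisely the bridge between $|Cent(G)|$ and $|2-Cent(G)|$, and Corollary~\ref{thm2.16} was stated exactly so that it can be cited at this point. The only sanity checks worth mentioning are the abelian case $n=1$, where $G$ is vacuously a $CA-$group, and the non-existence of $(2,n)$-centralizer groups for $n \in \{2,3,4\}$ recorded in the opening remarks of this section and in Theorem~\ref{thm2.14}. These observations restrict the effective range to $n \in \{5,6,7,8,9\}$, all of which land safely within the reach of Corollary~\ref{thm2.16}, so the proof reduces to a short two-case application.
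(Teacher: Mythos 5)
Your proposal is correct and follows the paper's own proof essentially verbatim: case-split on whether $Z(G)$ is trivial, apply Lemma~\ref{lemm2.2} to get $|Cent(G)| \leq 9$, and conclude by Corollary~\ref{thm2.16}. The extra remarks about the abelian case and the nonexistence of $(2,n)$-centralizer groups for small $n$ are harmless but not needed.
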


\begin{proof}
Let $G$ be a $(2,n)-$centralizer and $n \leq 9$. If $Z(G) \neq 1$, then by Lemma \ref{lemm2.2}(1), $|Cent(G)| < |2-Cent(G)| = n \leq 9$. So by Corollary \ref{thm2.16}, $G$ is a $CA-$group. If $Z(G) =1$ then by Lemma \ref{lemm2.2}(2), $|Cent(G)| \leq |2-Cent(G)| = n \leq 9$. So by Corollary \ref{thm2.16}, $G$ is a $CA-$group.
\end{proof}


\begin{thm}\label{thm2.18}
A group $G$ is $(2,5)-$centralizer if and only if $G \cong S_3$ or $G$ is not centerless and $\frac{G}{Z(G)} \cong \mathbb{Z}_2 \times \mathbb{Z}_2$. Moreover, $G$ is primitive $(2,5)-$centralizer if and only if $G \cong S_3$.
\end{thm}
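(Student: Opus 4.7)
The plan is to combine Lemma~\ref{thm2.17} with Theorem~\ref{thm2.6} in order to reduce the problem to the known classifications of $4$- and $5$-centralizer groups recorded in Theorem~\ref{thm1.1}. First I would observe that any abelian group has $|2-Cent(G)|=1$, so a $(2,5)$-centralizer group is automatically non-abelian; Lemma~\ref{thm2.17} then forces $G$ to be a $CA$-group. Theorem~\ref{thm2.6} now splits the argument into two cases: either $Z(G)=1$ and $|Cent(G)|=5$, or $Z(G)\neq 1$ and $|Cent(G)|=4$.

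Feeding these into Theorem~\ref{thm1.1}, part~(2) handles the centerless case by giving $G\cong G/Z(G)\in\{\mathbb{Z}_3\times\mathbb{Z}_3,\,S_3\}$; the abelian option is excluded by non-abelianness, leaving $G\cong S_3$. Part~(1) handles the case $Z(G)\neq 1$ immediately, producing $\frac{G}{Z(G)}\cong\mathbb{Z}_2\times\mathbb{Z}_2$.

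For the converse I would verify each direction separately. The case $G\cong S_3$ is immediate: $S_3$ is centerless, has $|Cent(S_3)|=5$ (the three order-$2$ subgroups, the unique $3$-Sylow, and $S_3$ itself), and every non-trivial centralizer is cyclic and hence abelian, so $S_3$ is $CA$; Theorem~\ref{thm2.6}(1) then gives $|2-Cent(S_3)|=5$. The case $Z(G)\neq 1$ with $\frac{G}{Z(G)}\cong\mathbb{Z}_2\times\mathbb{Z}_2$ requires one short extra step: for any $x\notin Z(G)$ the index $[G:C_G(x)]$ divides $[G:Z(G)]=4$ and can equal neither $1$ nor $4$, so it equals $2$, making $C_G(x)/Z(G)$ cyclic of order $2$ and hence $C_G(x)$ abelian; thus $G$ is a $CA$-group. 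Theorem~\ref{thm1.1}(1) then supplies $|Cent(G)|=4$ and Theorem~\ref{thm2.6}(2) supplies $|2-Cent(G)|=5$.

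For the ``moreover'' assertion, primitivity requires $|2-Cent(G/Z(G))|=5$. In the second case above $G/Z(G)$ is abelian and so $|2-Cent(G/Z(G))|=1\neq 5$, ruling out primitivity; only $G\cong S_3$ (where $G/Z(G)=G$) survives. I expect the mild obstacle to be the $CA$-verification for the $\frac{G}{Z(G)}\cong\mathbb{Z}_2\times\mathbb{Z}_2$ subcase, but this follows from the standard fact that a group which is cyclic modulo its centre is abelian.
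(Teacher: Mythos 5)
Your proposal is correct and follows essentially the same route as the paper: Lemma~\ref{thm2.17} plus the dichotomy of Theorem~\ref{thm2.6} (equivalently Corollary~\ref{cor2.8}) reduces the problem to the $4$- and $5$-centralizer classifications of Theorem~\ref{thm1.1}, and the primitive case is eliminated by noting $\mathbb{Z}_2\times\mathbb{Z}_2$ is abelian. The only cosmetic difference is in the converse, where you verify the $CA$-property for the $\frac{G}{Z(G)}\cong\mathbb{Z}_2\times\mathbb{Z}_2$ case by a direct index argument instead of citing Theorem~\ref{thm1.2}(\ref{thm1.2(4)}) as the paper does; both are fine.
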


\begin{proof}
Suppose $G$ is $(2,5)-$centralizer. By Lemma \ref{thm2.17}, $G$ is a $CA-$group and by Corollary \ref{cor2.8}, the following two cases can be occurred:
\begin{enumerate}
\item \textit{$Z(G) = 1$ and $G$ is $5-$centralizer}. By Theorem \ref{thm1.1}(2), $G \cong \frac{G}{Z(G)} \cong \mathbb{Z}_3 \times \mathbb{Z}_3$ or $S_3$ and since $G$ is non-abelian, $G \cong S_3$.

\item \textit{$Z(G) \neq 1$ and $G$ is $4-$centralizer}. By Theorem \ref{thm1.1}(1), $\frac{G}{Z(G)} \cong \mathbb{Z}_2 \times \mathbb{Z}_2$, as desired.
\end{enumerate}
In order to prove the converse of this theorem, we note that $S_3$ is obviously $(2,5)-$centralizer. We assume that $\frac{G}{Z(G)} \cong \mathbb{Z}_2 \times \mathbb{Z}_2$.  Clearly $G$ can not be centerless and by Theorem \ref{thm1.1}(1), $|Cent(G)| = 4$. By Theorem \ref{thm1.2}(\ref{thm1.2(4)}), $G$ is a $CA-$group and by Theorem \ref{thm2.6}, $|2-Cent(G)| = |Cent(G)| + 1 = 5$. This proves that $G$ is a $(2,5)-$centralizer group.

If $G$ is primitive $(2,5)-$centralizer, then $|2-Cent(G)| = |2-Cent(\frac{G}{Z(G)})| = 5$. By the first part of this theorem, $G \cong S_3$ or $\frac{G}{Z(G)} \cong \mathbb{Z}_2 \times \mathbb{Z}_2$. Since $\mathbb{Z}_2 \times \mathbb{Z}_2$ is  abelian, $|2-Cent(\frac{G}{Z(G)})| = 1$ which is a contradiction. Thus $G \cong S_3$, as desired.
\end{proof}

\begin{thm}\label{thm2.19}
A group $G$ is $(2,6)-$centralizer if and only if $G \cong A_4$ or $G$ is not centerless and $\frac{G}{Z(G)} \cong \mathbb{Z}_3 \times \mathbb{Z}_3$ or $S_3$. Moreover, $G$ is primitive $(2,6)-$centralizer if and only if $G \cong A_4$.
\end{thm}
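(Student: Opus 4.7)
The plan is to mirror the argument used for Theorem \ref{thm2.18}. First I would invoke Lemma \ref{thm2.17} to conclude that any $(2,6)-$centralizer group $G$ is a $CA-$group, and then apply Corollary \ref{cor2.8} to split into two disjoint sub-cases: either $Z(G)=1$ with $G$ a $6-$centralizer, or $Z(G)\neq 1$ with $G$ a $5-$centralizer.

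In the first sub-case, Theorem \ref{thm1.1}(3) lists the four possible structures of $\frac{G}{Z(G)}$, namely $\mathbb{Z}_2\times\mathbb{Z}_2\times\mathbb{Z}_2$, $\mathbb{Z}_2\times\mathbb{Z}_2\times\mathbb{Z}_2\times\mathbb{Z}_2$, $D_8$ and $A_4$. Since $Z(G)=1$ forces $G\cong\frac{G}{Z(G)}$, I would rule out the two elementary abelian candidates (because $G$ is non-abelian, a $(2,6)-$centralizer group being non-trivially $2-$element centralized) and $D_8$ (because $Z(D_8)\neq 1$), leaving only $G\cong A_4$. In the second sub-case, Theorem \ref{thm1.1}(2) immediately gives $\frac{G}{Z(G)}\cong \mathbb{Z}_3\times\mathbb{Z}_3$ or $S_3$.

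For the converse I would treat each surviving structure individually. For $A_4$, since $|A_4|=12=2\cdot 2\cdot 3$, Theorem \ref{thm1.2}(\ref{thm1.2(4)}) gives that $A_4$ is a $CA-$group, and Theorem \ref{thm2.6}(1) then yields $|2-Cent(A_4)|=|Cent(A_4)|=6$. For $\frac{G}{Z(G)}\cong \mathbb{Z}_3\times\mathbb{Z}_3$ or $S_3$, Theorem \ref{thm1.1}(2) gives $|Cent(G)|=5$; combining Theorem \ref{thm1.2}(\ref{thm1.2(1d)}) (which forces $r=4$) with Theorem \ref{thm1.2}(\ref{thm1.2(3)}) (since $|Cent(G)|=r+1$) shows that every proper centralizer of $G$ is abelian, so $G$ is a $CA-$group; since $Z(G)\neq 1$, Theorem \ref{thm2.6}(2) gives $|2-Cent(G)|=|Cent(G)|+1=6$.

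For the primitive statement, the extra condition $|2-Cent(\frac{G}{Z(G)})|=6$ pins down $A_4$. Indeed, if $\frac{G}{Z(G)}\cong \mathbb{Z}_3\times\mathbb{Z}_3$ then $\frac{G}{Z(G)}$ is abelian and $|2-Cent(\frac{G}{Z(G)})|=1$, while if $\frac{G}{Z(G)}\cong S_3$ then by Theorem \ref{thm2.18} we have $|2-Cent(S_3)|=5$; both cases are eliminated. Only $G\cong A_4$ remains, and here $\frac{G}{Z(G)}\cong A_4$ with $|2-Cent|=6$ as computed above. The main obstacle I anticipate is verifying the $CA-$property when $\frac{G}{Z(G)}\cong \mathbb{Z}_3\times\mathbb{Z}_3$: since $|\frac{G}{Z(G)}|=9$ is not of the form $pqr$, Theorem \ref{thm1.2}(\ref{thm1.2(4)}) does not apply directly, and the cleanest workaround is to route through Theorem \ref{thm1.2}(\ref{thm1.2(1d)}) together with Theorem \ref{thm1.2}(\ref{thm1.2(3)}), as sketched above.
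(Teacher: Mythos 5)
Your proposal is correct and follows the same skeleton as the paper's proof: Lemma \ref{thm2.17} plus Corollary \ref{cor2.8} for the case split, Theorem \ref{thm1.1}(2)--(3) for the structure of $\frac{G}{Z(G)}$, and Theorem \ref{thm2.6} for the converse and the primitive part. The one place where you genuinely depart from the paper is also where you improve on it: to get the $CA-$property when $\frac{G}{Z(G)} \cong \mathbb{Z}_3 \times \mathbb{Z}_3$ or $S_3$, the paper simply cites Theorem \ref{thm1.2}(\ref{thm1.2(4)}), even though $9$ and $6$ are products of only \emph{two} primes, so that lemma does not literally apply as stated; your detour through Theorem \ref{thm1.2}(\ref{thm1.2(1d)}) and Theorem \ref{thm1.2}(\ref{thm1.2(3)}) (from $|Cent(G)| = 5 = r+1$ every proper centralizer is abelian) closes that gap cleanly. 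Two smaller divergences are also fine: you eliminate $D_8$ in the centerless case via $Z(D_8) \neq 1$ rather than the paper's computation $|2-Cent(D_8)| = 5$, and you derive $|2-Cent(A_4)| = 6$ from the $CA-$property of $A_4$ (order $2\cdot 2\cdot 3$, so here Theorem \ref{thm1.2}(\ref{thm1.2(4)}) genuinely applies) together with $|Cent(A_4)|=6$, where the paper only asserts the value.
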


\begin{proof}
Suppose $G$ is $(2,6)-$centralizer. By Lemma \ref{thm2.17}, $G$ is a $CA-$group and by Corollary \ref{cor2.8}, the following two cases can be occurred:
\begin{enumerate}
\item \textit{$G$ is centerless and $G$ is a $6-$centralizer}. By Theorem \ref{thm1.1}(3), $G \cong \frac{G}{Z(G)} \cong \mathbb{Z}_2 \times \mathbb{Z}_2 \times \mathbb{Z}_2, \mathbb{Z}_2 \times \mathbb{Z}_2 \times \mathbb{Z}_2 \times \mathbb{Z}_2$, $D_8$ or $A_4$. Since $G$ is a non-abelian group and $|2-Cent(D_8)| = 5$,  $G \cong A_4$.

\item \textit{$G$ is not centerless and $G$ is $5-$centralizer}. By Theorem \ref{thm1.1}(2), $\frac{G}{Z(G)} \cong \mathbb{Z}_3 \times \mathbb{Z}_3$ or $S_3$, as desired.
\end{enumerate}

Conversely, if $G \cong A_4$ then $|2-Cent(A_4)|=6$. It is enough to assume that $G$ is not centerless and $\frac{G}{Z(G)} \cong \mathbb{Z}_3 \times \mathbb{Z}_3$ or $S_3$. By Theorem \ref{thm1.1}(2), $|Cent(G)|=5$ and  by Theorem \ref{thm1.2}(\ref{thm1.2(4)}), $G$ is a $CA-$group. Moreover,  by Theorem \ref{thm2.6}(2), $|2-Cent(G)| = |Cent(G)|+1= 6$. This proves that $G$ is $(2,6)-$centralizer.

If $G$ is a primitive $(2,6)-$centralizer, then $|2-Cent(G)| = |2-Cent(\frac{G}{Z(G)})| = 6$. By the first part of this theorem, $G \cong A_4$ or $G$ is not centerless and so $\frac{G}{Z(G)} \cong \mathbb{Z}_3 \times \mathbb{Z}_3$ or $S_3$. Since $|2-Cent(\mathbb{Z}_3 \times \mathbb{Z}_3)|=1$ and $|2-Cent(S_3)| = 5$, $G \cong A_4$ which completes our argument.
\end{proof}

\begin{thm}\label{thm2.20}
A group $G$ is $(2,7)-$centralizer if and only if $G \cong D_{10}, R$ or $G$ is not centerless and it is $6-$centralizer.
Moreover, $G$ is primitive $(2,7)-$centralizer if and only if $G \cong D_{10}$ or $R$.
\end{thm}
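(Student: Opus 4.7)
The plan is to mimic the template used for Theorems \ref{thm2.18} and \ref{thm2.19}. First I would invoke Lemma \ref{thm2.17} to deduce that any $(2,7)$-centralizer group $G$ is a $CA$-group, and then apply Corollary \ref{cor2.8} to split into two mutually exclusive cases: either $Z(G)=1$ and $G$ is $7$-centralizer, or $Z(G)\neq 1$ and $G$ is $6$-centralizer. In the centerless case, Theorem \ref{thm1.1}(5) forces $G\cong \frac{G}{Z(G)}$ to be one of $\mathbb{Z}_5\times\mathbb{Z}_5$, $D_{10}$, or $R$; since $|2-Cent(G)|=7$ precludes $G$ from being abelian, only $D_{10}$ and $R$ survive. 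The non-centerless case is precisely the remaining alternative in the theorem.

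For the converse I would handle the three possibilities separately. The groups $D_{10}$ and $R$ are centerless and $7$-centralizer by Theorem \ref{thm1.1}(5); Corollary \ref{thm2.16} asserts they are $CA$-groups, so Theorem \ref{thm2.6}(1) yields $|2-Cent(G)|=|Cent(G)|=7$. If instead $G$ is not centerless and $6$-centralizer, Corollary \ref{thm2.16} again shows $G$ is a $CA$-group, and Theorem \ref{thm2.6}(2) gives $|2-Cent(G)|=|Cent(G)|+1=7$.

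For the primitive statement, the strategy is to examine the candidates from the first part and check whether $|2-Cent(\frac{G}{Z(G)})|=7$. For $D_{10}$ and $R$, this is automatic since they are centerless and so $\frac{G}{Z(G)}\cong G$. In the remaining case $G$ is a non-centerless $6$-centralizer group, and Theorem \ref{thm1.1}(3) restricts $\frac{G}{Z(G)}$ to one of $\mathbb{Z}_2\times\mathbb{Z}_2\times\mathbb{Z}_2$, $\mathbb{Z}_2\times\mathbb{Z}_2\times\mathbb{Z}_2\times\mathbb{Z}_2$, $D_8$, or $A_4$. The two elementary abelian options give $|2-Cent|=1$, while the values $|2-Cent(D_8)|=5$ and $|2-Cent(A_4)|=6$ (already recorded in the proofs of Theorems \ref{thm2.18} and \ref{thm2.19}) disqualify the remaining two; hence no non-centerless $6$-centralizer group can be primitive $(2,7)$-centralizer, and only $D_{10}$ and $R$ remain.

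The main obstacle is the primitive direction, where I must rule out every non-centerless $(2,7)$-centralizer group. However this reduces to reading off $|2-Cent(\frac{G}{Z(G)})|$ from the four quotients allowed by Theorem \ref{thm1.1}(3), and these values have already been computed in the previous two theorems, so no new calculation is required.
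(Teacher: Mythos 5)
Your proposal is correct and follows essentially the same route as the paper: Lemma \ref{thm2.17} plus Corollary \ref{cor2.8} for the forward split, Theorem \ref{thm2.6} for the converse, and elimination of the four quotients from Theorem \ref{thm1.1}(3) for the primitive statement. The only (minor) difference is that you derive $|2-Cent(D_{10})|=|2-Cent(R)|=7$ from Corollary \ref{thm2.16} and Theorem \ref{thm2.6}(1) rather than asserting the values directly as the paper does, which is a slightly cleaner justification of the same fact.
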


\begin{proof}
Suppose $G$ is $(2,7)-$centralizer. By Lemma \ref{thm2.17}, $G$ is a $CA-$group and by Corollary \ref{cor2.8}, $G$ is a  $6-$centralizer group with non-trivial center or a centerless  $7-$centralizer group. The first case leads to our result and in the second case, $G \cong D_{10}$ or $R$, as desired.

Conversely, if $G \cong D_{10}$ or $R$, then $|2-Cent(D_{10})|=|2-Cent(R)|=7$, as desired. So, it is enough to assume that $G$ is not centerless and $6-$centralizer. By Theorem \ref{thm1.1}(3), $\frac{G}{Z(G)} \cong \mathbb{Z}_2 \times \mathbb{Z}_2 \times \mathbb{Z}_2, \mathbb{Z}_2 \times \mathbb{Z}_2 \times \mathbb{Z}_2 \times \mathbb{Z}_2$, $D_8$ or $A_4$. By Theorem \ref{thm1.2}(\ref{thm1.2(4)}) and Lemma \ref{lemm2.15}, $G$ is $CA-$group and by Theorem \ref{thm2.6}(2), $|2-Cent(G)| = |Cent(G)|+1= 7$. This proves that $G$ is $(2,7)-$centralizer.

If $G$ is primitive $(2,7)-$centralizer. Then $|2-Cent(G)| = |2-Cent(\frac{G}{Z(G)})| = 7$. By the first part of this theorem, $G \cong D_{10}, R$ or $G$ is not centerless and $6-$centralizer. Since $|2-Cent(\mathbb{Z}_2 \times \mathbb{Z}_2\times \mathbb{Z}_2)|=|2-Cent(\mathbb{Z}_2 \times \mathbb{Z}_2\times \mathbb{Z}_2\times \mathbb{Z}_2)|=1$, $|2-Cent(D_8)| = 5$ and $|2-Cent(A_4)| = 6$. So $G \cong  D_{10}, R$ which completes our argument.
\end{proof}

\begin{thm}\label{thm2.21}
A group $G$ is $(2,8)-$centralizer if and only if $G$ is a $7-$centralizer group with non-trivial center.
Moreover, There is no primitive $(2,8)-$centralizer group.
\end{thm}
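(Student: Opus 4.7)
The plan is to mirror the structure of the preceding characterization theorems (2.18--2.20), using the already-established machinery that reduces $(2,n)$-centralizer questions for $n \le 9$ to $n$-centralizer questions via the CA-group dichotomy.

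For the forward implication of the first assertion, I would begin by invoking Lemma \ref{thm2.17} to conclude that any $(2,8)$-centralizer group $G$ is a CA-group. Applying Corollary \ref{cor2.8} then splits the analysis into two subcases: either $Z(G)=1$ and $G$ is $8$-centralizer, or $Z(G)\neq 1$ and $G$ is $7$-centralizer. The second subcase is exactly the desired conclusion, so the real work is to eliminate the first. For that I would invoke Theorem \ref{thm1.1}(6), which forces $\frac{G}{Z(G)}\cong \mathbb{Z}_2\times\mathbb{Z}_2\times\mathbb{Z}_2$, $D_{12}$, or $A_4$, and then exploit $Z(G)=1$ to identify $G$ with $\frac{G}{Z(G)}$ and rule out each candidate: $\mathbb{Z}_2\times\mathbb{Z}_2\times\mathbb{Z}_2$ is abelian and hence not $8$-centralizer, $D_{12}$ fails to be centerless (its center has order $2$), and $A_4$ is $6$-centralizer (as already used implicitly in Theorem \ref{thm2.19}). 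So the first subcase is vacuous, and only the second survives.

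For the converse of the first assertion, suppose $G$ is $7$-centralizer with $Z(G)\neq 1$. By Corollary \ref{thm2.16}, $G$ is a CA-group, and then Theorem \ref{thm2.6}(2) yields $|2\mbox{-}Cent(G)|=|Cent(G)|+1=8$, so $G$ is $(2,8)$-centralizer.

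For the primitive part, assume $G$ is primitive $(2,8)$-centralizer, so $|2\mbox{-}Cent(\tfrac{G}{Z(G)})|=|2\mbox{-}Cent(G)|=8$. By the first assertion, $G$ is $7$-centralizer with non-trivial center, so Theorem \ref{thm1.1}(5) tells us $\frac{G}{Z(G)}\cong \mathbb{Z}_5\times\mathbb{Z}_5$, $D_{10}$, or $R$. I would then read off the three values of $|2\mbox{-}Cent|$ on this list: $\mathbb{Z}_5\times\mathbb{Z}_5$ is abelian so contributes $1$, while Theorem \ref{thm2.20} records $|2\mbox{-}Cent(D_{10})|=|2\mbox{-}Cent(R)|=7$. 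Since none equals $8$, there is no primitive $(2,8)$-centralizer group.

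The only step that demands genuine care is the elimination of the centerless $8$-centralizer subcase, since the other steps are essentially bookkeeping on top of already-proved results; I expect the referee to scrutinize the $A_4$ and $D_{12}$ checks, so I would make those explicit rather than leaving them implicit.
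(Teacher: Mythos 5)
Your proposal is correct and follows essentially the same route as the paper: Lemma \ref{thm2.17} plus Corollary \ref{cor2.8} to reduce to the two subcases, elimination of the centerless $8$-centralizer case via Theorem \ref{thm1.1}(6), Theorem \ref{thm2.6}(2) for the converse, and inspection of the list from Theorem \ref{thm1.1}(5) for the primitive part. The only cosmetic differences are that you rule out $D_{12}$ by noting it is not centerless and $A_4$ by its centralizer count (the paper instead computes $|2\mbox{-}Cent(D_{12})|=|2\mbox{-}Cent(A_4)|=6$), and you get the CA-property in the converse directly from Corollary \ref{thm2.16} rather than via Theorem \ref{thm1.2}(\ref{thm1.2(4)}); both variants are valid.
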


\begin{proof}
Suppose $G$ is $(2,8)-$centralizer. Apply Lemma \ref{thm2.17} to deduce that $G$ is a $CA-$group. By Corollary \ref{cor2.8}, $G$ is a centerless $8-$centralizer group or a $7-$centralize group with non-trivial center. If $G$ is a centerless $8-$centralizer group then by Theorem \ref{thm1.1}(6), $G \cong \frac{G}{Z(G)} \cong \mathbb{Z}_2 \times \mathbb{Z}_2\times \mathbb{Z}_2$, $D_{12}$ or $A_4$. But $G$ is a non-abelian group and  $|2-Cent(D_{12})|=|2-Cent(A_4)|=6$, which is impossible. Therefore, $G$ is a $7-$centralizer group with non-trivial center, as desired.

Conversely,  suppose $G$ is not centerless and it is a $7-$centralizer group. Then, by Theorem \ref{thm1.1}(5), $\frac{G}{Z(G)} \cong \mathbb{Z}_5 \times \mathbb{Z}_5$, $D_{10}$ or $R$, and by Theorem \ref{thm1.2}(\ref{thm1.2(4)}), $G$ is $CA-$group. We now apply Theorem \ref{thm2.6}(2) to deduce that $|2-Cent(G)| = |Cent(G)|+1= 8$. This proves that $G$ is $(2,8)-$centralizer.

If $G$ is primitive $(2,8)-$centralizer. Then $|2-Cent(G)| = |2-Cent(\frac{G}{Z(G)})| = 8$. By the first part of this theorem,  $G$ is a $7-$centralizer group with non-trivial center and by Theorem \ref{thm1.1}(5), $\frac{G}{Z(G)} \cong \mathbb{Z}_5 \times \mathbb{Z}_5$, $D_{10}$ or $R$. Since $|2-Cent(\mathbb{Z}_5 \times \mathbb{Z}_5)|=1$ and $|2-Cent(D_{10})| = |2-Cent(R)| = 7$. So, there is no primitive $(2,8)-$centralizer group which completes our argument.
\end{proof}

\begin{thm}\label{thm2.22}
A group $G$ is $(2,9)-$centralizer if and only if $G \cong D_{14}$, $Hol(\mathbb{Z}_7)$, a non-abelian group of order 21 or $G$ is a  $8-$centralizer group with non-trivial center. Moreover, $G$ is primitive $(2,9)-$centralizer if and only if $G \cong D_{14}$, $Hol(\mathbb{Z}_7)$ or a non-abelian group of order 21.
\end{thm}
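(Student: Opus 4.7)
The plan is to follow the template of Theorems~\ref{thm2.20} and~\ref{thm2.21}. Assume first that $G$ is $(2,9)$-centralizer. Lemma~\ref{thm2.17} forces $G$ to be a $CA$-group, so Corollary~\ref{cor2.8} splits the analysis into two subcases: either $Z(G)=1$ and $G$ is $9$-centralizer, or $Z(G)\ne 1$ and $G$ is $8$-centralizer. In the centerless subcase Theorem~\ref{thm1.1}(7) identifies $G\cong G/Z(G)$ with one of $\mathbb{Z}_7\times\mathbb{Z}_7$, $D_{14}$, $Hol(\mathbb{Z}_7)$ or the non-abelian group of order $21$; the abelian candidate is discarded since $G$ is non-abelian, leaving the three listed groups. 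The second subcase yields the remaining conclusion directly.

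For the converse I check each listed type. The three non-abelian centerless candidates $D_{14}$, $Hol(\mathbb{Z}_7)$ and the non-abelian group of order $21$ are $9$-centralizer by Theorem~\ref{thm1.1}(7), hence $CA$-groups by Corollary~\ref{thm2.16}, and being centerless Theorem~\ref{thm2.6}(1) forces $|2-Cent(G)|=|Cent(G)|=9$. If instead $Z(G)\ne 1$ and $G$ is $8$-centralizer, Theorem~\ref{thm1.1}(6) makes $G/Z(G)$ isomorphic to $\mathbb{Z}_2\times\mathbb{Z}_2\times\mathbb{Z}_2$, $D_{12}$ or $A_4$; each has order a product of at most three primes, so Theorem~\ref{thm1.2}(\ref{thm1.2(4)}) makes $G$ a $CA$-group and Theorem~\ref{thm2.6}(2) yields $|2-Cent(G)|=|Cent(G)|+1=9$.

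For the primitive claim I need $|2-Cent(G/Z(G))|=9$. The three centerless candidates satisfy $G\cong G/Z(G)$ and are automatically primitive. For the remaining subcase the quotient $G/Z(G)$ is $\mathbb{Z}_2\times\mathbb{Z}_2\times\mathbb{Z}_2$ (abelian, $|2-Cent|=1$), $D_{12}$ or $A_4$; both of the latter are $CA$-groups with $|G'|=3$ and $|G'|=4$ by Theorem~\ref{thm1.2}(\ref{thm1.2(5)}), and Theorem~\ref{thm2.6} (part~(2) for the non-centerless $D_{12}$, part~(1) for the centerless $A_4$) gives $|2-Cent(D_{12})|=|2-Cent(A_4)|=6$. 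None equals $9$, so the $8$-centralizer subcase yields no primitive example. The main bookkeeping obstacle throughout is tracking whether part~(1) or part~(2) of Theorem~\ref{thm2.6} applies and verifying the $CA$-hypothesis via Theorem~\ref{thm1.2}(\ref{thm1.2(4)}); in particular $|G/Z(G)|=8$ requires observing that $2^3$ is a product of three primes.
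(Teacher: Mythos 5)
Your proposal follows the paper's proof essentially verbatim: the same split via Lemma~\ref{thm2.17} and Corollary~\ref{cor2.8}, the same invocation of Theorem~\ref{thm1.1}(6)--(7) in each direction, and the same elimination of the $8$-centralizer subcase in the primitive part by noting $|2\text{-}Cent(\mathbb{Z}_2^3)|=1$ and $|2\text{-}Cent(D_{12})|=|2\text{-}Cent(A_4)|=6$. The only (welcome) difference is that you derive the numerical values $|2\text{-}Cent(D_{14})|=|2\text{-}Cent(Hol(\mathbb{Z}_7))|=9$ and $|2\text{-}Cent(D_{12})|=|2\text{-}Cent(A_4)|=6$ from Corollary~\ref{thm2.16} and Theorem~\ref{thm2.6} rather than asserting them as the paper does.
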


\begin{proof}
Suppose $G$ is $(2,9)-$centralizer. Then, by Lemma \ref{thm2.17}, $G$ is a $CA-$group and by Corollary \ref{cor2.8}, $G$ is a $8-$centralizer group with non-trivial center or $G$ is a centerless $9-$centralizer group. In later, we apply Theorem \ref{thm1.1}(7) to deduce that $G \cong \frac{G}{Z(G)} \cong \mathbb{Z}_7 \times \mathbb{Z}_7$, $D_{14}$, $Hol(\mathbb{Z}_7)$ or a non-abelian group of order 21. Since $G$ is a non-abelian group,  $G \cong D_{14}$, $Hol(\mathbb{Z}_7)$ or a non-abelian group of order 21, as desired.

Conversely, if  $G \cong D_{14}$, $Hol(\mathbb{Z}_7)$ or a non-abelian group $L$ of order 21, then $|2-Cent(D_{14})| = |2-Cent(Hol(\mathbb{Z}_7))| = |2-Cent(L)| = 9$, as desired. So, it is enough to assume that $G$ is a $8-$centralizer group with non-trivial center. By Theorem \ref{thm1.1}(6),  $\frac{G}{Z(G)} \cong \mathbb{Z}_2 \times \mathbb{Z}_2 \times \mathbb{Z}_2$, $D_{12}$ or $A_4$, and  by Theorem \ref{thm1.2}(\ref{thm1.2(4)}), $G$ is $CA-$group. We now apply  Theorem \ref{thm2.6}(2) to deduce that $|2-Cent(G)| = |Cent(G)|+1= 9$. This proves that $G$ is $(2,9)-$centralizer.

If $G$ is primitive $(2,9)-$centralizer, then $|2-Cent(G)| = |2-Cent(\frac{G}{Z(G)})| = 9$. By the first part of theorem, $G \cong D_{14}$, $Hol(\mathbb{Z}_7)$, a non-abelian group of order $21$ or $G$ a $8-$centralizer group with non-trivial center. Since $|2-Cent(\mathbb{Z}_2 \times \mathbb{Z}_2\times \mathbb{Z}_2)|=1$ and $|2-Cent(D_{12})| =|2-Cent(A_4)| = 6$, $G \cong D_{14}$, $Hol(\mathbb{Z}_7)$ or a non-abelian group of order 21. This completes our argument.
\end{proof}

\section{Finite Groups with a Given Number of $2-$Eleemnt Centralizers}
In this section, a characterization of the alternating group $A_5$ with respect to the number of $2-$element centralizers is given. We also prove that all finite groups with  at most $21$ $2-$element centralizers are solvable.

\begin{lem}\label{lem3.1}
Let $G$ be a finite group such that $\frac{G}{Z(G)} \cong A_5$. Then $G \cong A_5$ or $|2-Cent(G)| = |Cent(G)| + 1 = 23, 33$.
\end{lem}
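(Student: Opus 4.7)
The plan is to reduce immediately to the case $Z(G)\ne 1$ and then to prove that $G$ is a $CA$-group, so that Theorem \ref{thm2.6}(2) combined with Theorem \ref{thm1.2}(\ref{thm1.2(new)}) yields the claim. Observe first that if $Z(G)=1$ then $G\cong G/Z(G)\cong A_5$; thus if $G\not\cong A_5$ we must have $Z(G)\ne 1$, and Theorem \ref{thm1.2}(\ref{thm1.2(new)}) gives $|Cent(G)|\in\{22,32\}$. It therefore suffices to establish that $G$ is a $CA$-group, because then Theorem \ref{thm2.6}(2) delivers $|2\text{-}Cent(G)|=|Cent(G)|+1\in\{23,33\}$.

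To show $G$ is a $CA$-group, I would fix $x\in G\setminus Z(G)$ and let $\pi\colon G\to G/Z(G)\cong A_5$ be the canonical projection. Since $A_5$ is itself a $CA$-group (its proper non-central centralizers are cyclic of order $3$, cyclic of order $5$, or the Klein four group $V$), $C_{A_5}(\pi(x))$ is isomorphic to $\mathbb{Z}_3$, $\mathbb{Z}_5$, or $V=\mathbb{Z}_2\times\mathbb{Z}_2$. Because $Z(G)\subseteq C_G(x)$, we have $C_G(x)/Z(G)\le C_{A_5}(\pi(x))$. In the first two cases this quotient is cyclic, and a group with cyclic central quotient is abelian, so $C_G(x)$ is abelian.

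The delicate case is $C_{A_5}(\pi(x))=V$. Here I would set $K=\pi^{-1}(V)$, so that $C_G(x)\le K$ and $K/Z(G)\cong V$. If $K$ is abelian there is nothing to prove. Otherwise $K/Z(K)$ is a non-cyclic quotient of $V$, forcing $K/Z(K)\cong V$ and consequently $Z(K)=Z(G)$; Theorem \ref{thm1.1}(1) then gives $|Cent(K)|=4$, and Corollary \ref{thm2.16} implies that $K$ is itself a $CA$-group. Hence $C_G(x)=C_K(x)$ is abelian, completing the proof that $G$ is a $CA$-group.

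The main obstacle is precisely this Klein four case: here $C_G(x)/Z(G)$ need not be cyclic, so the naive central-quotient argument fails, and one must instead exploit the structure of the full preimage $K=\pi^{-1}(V)$, verifying via Theorem \ref{thm1.1}(1) and Corollary \ref{thm2.16} that $K$ is itself a $CA$-group before concluding that centralizers inside $K$ — and in particular $C_G(x)$ — are abelian. Once the $CA$ property is secured, the numerical conclusion is an immediate corollary of Theorems \ref{thm2.6} and \ref{thm1.2}(\ref{thm1.2(new)}).
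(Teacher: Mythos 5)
Your proof is correct, but it takes a genuinely different route from the paper's. The paper never establishes that $G$ is a $CA$-group; instead it argues directly that the proper element centralizers of $A_5$ are Sylow subgroups with pairwise trivial intersections, lifts this along $\pi\colon G\to G/Z(G)$ to get $C_G(x)\cap C_G(y)=Z(G)$ for non-central $x,y$ with distinct centralizer images, and concludes $2\text{-}Cent(G)=Cent(G)\cup\{Z(G)\}$, whence $|2\text{-}Cent(G)|=|Cent(G)|+1$; the count $23$ or $33$ then follows from Theorem \ref{thm1.2}(\ref{thm1.2(new)}) exactly as in your argument. Your approach instead verifies the $CA$-property of $G$ itself (the cyclic-central-quotient argument for the $\mathbb{Z}_3$ and $\mathbb{Z}_5$ cases, and the preimage $K=\pi^{-1}(V)$ with $Z(K)=Z(G)$, $|Cent(K)|=4$ in the Klein-four case) and then quotes Theorem \ref{thm2.6}(2), which packages the intersection analysis via It\^{o}'s result, Theorem \ref{thm1.2}(\ref{thm1.2(12)}). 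What your route buys is rigor precisely where the paper is loosest: the paper's ``choose arbitrary $x,y$'' step silently assumes $C_{G/Z}(xZ)\cap C_{G/Z}(yZ)=1$, which fails when $\bar{x}$ and $\bar{y}$ are, say, distinct involutions in the same Klein four subgroup of $A_5$; your treatment of $K$ handles exactly this configuration. What the paper's route buys is brevity and independence from the $CA$-machinery of Section 3. One caution: you could not have shortcut the $CA$-property via Theorem \ref{thm1.2}(\ref{thm1.2(11)}), since $G$ here has no abelian normal subgroup of prime index; your direct verification is the right move.
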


\begin{proof}
It is easy to see that  all element centralizers of $A_5$ are Sylow subgroups of $A_5$ and so each pair of them have trivial intersection. If $Z(G) = 1$ then $G \cong A_5$. Suppose $Z = Z(G) \ne 1$, then $G$ is not abelian and by Lemma \ref{lemm2.1}(2), $G \in 2-Cent(G)$. Choose arbitrary elements  $x, y \in G$. Obviously $\frac{C_G(x)}{Z} \leqslant C_{G/Z}(xZ)$ and $\frac{C_G(y)}{Z} \leqslant C_{G/Z}(yZ)$. Hence $\frac{C_G(x)}{Z} \cap \frac{C_G(y)}{Z} \leqslant C_{G/Z}(xZ) \cap C_{G/Z}(yZ) = 1_{G/Z}$. Therefore $\frac{C_G(x) \cap C_G(y)}{Z} = 1$ and so $C_G(x) \cap C_G(y) = Z$. Thus,  $Z \in 2-Cent(G)$ which  proves that $|2-Cent(G)| = |Cent(G)| + 1$. Finally,   Theorem \ref{thm1.2}(\ref{thm1.2(new)}) implies that $|2-Cent(G)| = 23$ or $33$.
\end{proof}

\begin{thm}
Let $G$ be a finite group. The following are hold:
\begin{enumerate}
\item  If $|2-Cent(G)| < 22$, then $G$ is solvable.
\item If $G$ is simple and $|2-Cent(G)| = 22$, then $G \cong A_5$.
\end{enumerate}
\end{thm}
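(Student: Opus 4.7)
The plan is to deduce both statements from earlier structural results by leveraging Lemma \ref{lemm2.2}, which sandwiches $|Cent(G)|$ against $|2-Cent(G)|$ according to whether the center is trivial, together with the solvability bound in Theorem \ref{thm1.2}(\ref{thm1.2(15)}) and the simple-group identification in Theorem \ref{thm1.2}(\ref{thm1.2(new1)}).

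For part (1), I would split on the center of $G$. If $Z(G)\neq 1$, Lemma \ref{lemm2.2}(1) gives $|Cent(G)| < |2-Cent(G)| < 22$; if $Z(G)=1$, Lemma \ref{lemm2.2}(2) gives $|Cent(G)| \leq |2-Cent(G)| < 22$. In either case $|Cent(G)| \leq 21$, so Theorem \ref{thm1.2}(\ref{thm1.2(15)}) immediately delivers solvability of $G$.

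For part (2), suppose $G$ is simple with $|2-Cent(G)| = 22$. I would first rule out $G$ abelian: any abelian group $H$ satisfies $C_H(\{x,y\}) = H$ for every pair, so $|2-Cent(H)| = 1 \neq 22$. Hence $G$ is a non-abelian simple group, and in particular $Z(G) = 1$. Lemma \ref{lemm2.2}(2) then yields $|Cent(G)| \leq 22$. If we had $|Cent(G)| \leq 21$, Theorem \ref{thm1.2}(\ref{thm1.2(15)}) would force $G$ solvable, contradicting non-abelian simplicity. Therefore $|Cent(G)| = 22$, and Theorem \ref{thm1.2}(\ref{thm1.2(new1)}) identifies $G \cong A_5$.

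There is no genuine obstacle here: both parts are short bookkeeping arguments, and no truly new idea beyond the two-way inequality of Lemma \ref{lemm2.2} is required. The only mild subtlety is keeping track of the case split on $Z(G)$ in part (1) (so that one never leaves the range in which Theorem \ref{thm1.2}(\ref{thm1.2(15)}) applies) and making sure, in part (2), to dispose of the abelian possibility before invoking Lemma \ref{lemm2.2}(2) with the hypothesis $Z(G)=1$.
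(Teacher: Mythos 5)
Your proposal is correct and follows essentially the same route as the paper: Lemma \ref{lemm2.2} to bound $|Cent(G)|$ by $|2-Cent(G)|$, then Theorem \ref{thm1.2}(\ref{thm1.2(15)}) for solvability and Theorem \ref{thm1.2}(\ref{thm1.2(new1)}) to identify $A_5$. Your extra care in splitting on $Z(G)$ and in disposing of the abelian case is a minor tidying of details the paper glosses over, not a different argument.
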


\begin{proof}
Suppose $|2-Cent(G)| < 22$. By Lemma \ref{lemm2.2}, $|Cent(G)| \leq |2-Cent(G)| < 22$ and so by Theorem \ref{thm1.2}(\ref{thm1.2(15)}), $G$ is solvable. This proves part (1). We now assume that $G$ is simple and $|2-Cent(G)| = 22$. Again by Lemma \ref{lemm2.2}, $|Cent(G)| \leq 22$. If $|Cent(G)| \leq 21$, then by Theorem \ref{thm1.2}(\ref{thm1.2(15)}), $G$ is solvable, contradicts by simplicity of $G$. Therefore, $|Cent(G)| = 22$ and by Theorem \ref{thm1.2}(\ref{thm1.2(new1)}), $G \cong A_5$.
\end{proof}

\begin{rem}
Suppose $G$ is a finite non-abelian simple group with $|2-Cent(G)| \leq 100$. Then by Lemma \ref{lemm2.2}(2),  $|Cent(G)| \leq |2-Cent(G)| \leq 100$ and by \cite[Theorem A]{PSL}, the group $G$ is isomorphic to one of the simple  groups $PSL(2,5)$, $PSL(2,7)$ or $PSL(2,8)$.
\end{rem}

Our calculations with the aid of Gap suggest the following conjecture:

\begin{conj}
Suppose $G$ and $H$ are finite simple group and $|2-Cent(G)| = |2-Cent(H)|$. Then $G \cong H$.
\end{conj}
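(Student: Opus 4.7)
The plan is to attack this conjecture by exploiting the classification of finite simple groups (CFSG) together with detailed knowledge of centralizer structure. Since every non-abelian finite simple group is centerless, Lemma~\ref{lemm2.2}(2) gives $|Cent(G)| \leq |2-Cent(G)|$, so any bound on $|2-Cent(G)|$ forces a bound on $|Cent(G)|$. Combined with the Remark above (which, via Theorem~A of \cite{PSL}, already pins down the simple groups with $|2-Cent(G)| \leq 100$ to be among $PSL(2,5)$, $PSL(2,7)$, $PSL(2,8)$), this reduces the conjecture for small ranges to a direct computation, so the strategy is to push the same idea uniformly.

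First I would derive explicit formulas for $|Cent(G)|$ and then for the full intersection lattice of element centralizers within each infinite family. For $A_n$, element centralizers are determined by cycle type and their pairwise intersections of elements with comparable supports reduce to products of smaller symmetric/alternating factors; this should give a monotone combinatorial formula for $|2-Cent(A_n)|$ strictly increasing in $n$. For Lie-type groups $G(q)$, centralizers of semisimple elements are reductive subgroups described by subsystems of the root system, and pairwise intersections correspond to intersections of such subsystems together with the unipotent parts; the goal is to show that $|2-Cent(G(q))|$ already separates the rank and field parameters. The 26 sporadic groups would then be handled individually via their character tables.

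Second, the finitely many ``accidental collisions'' of simple-group orders (most notably $|A_8| = |PSL(3,4)|$) would be excluded by a direct GAP computation showing that $|2-Cent|$ distinguishes the two groups. Combined with the order formulas for simple groups, this trims the CFSG list to a manageable case-check up to any chosen order bound, making a computer-assisted verification feasible well beyond the range covered by the Remark.

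The main obstacle lies in the Lie-type phase. While $|Cent(G)|$ for $G(q)$ can be read off from the classification of conjugacy classes, $|2-Cent(G)|$ counts a set that is strictly larger in general, because intersections $C_G(x) \cap C_G(y)$ need not arise as centralizers of single elements and must be enumerated up to coincidence. Even for $PSL(2,q)$ one must split cases according to whether two distinct split (respectively non-split) maximal tori meet trivially or in a non-trivial common subgroup, and the analogous bookkeeping proliferates in higher rank. A clean uniform formula for $|2-Cent(G(q))| - |Cent(G(q))|$ in terms of the root datum would settle the Lie-type case, but in the absence of such a formula the honest outcome of this plan is a proof for $A_n$ and $PSL(2,q)$ together with a computer-assisted verification up to a large order bound, leaving the fully uniform statement as the genuinely hard residue.
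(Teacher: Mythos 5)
The statement you are proving is labelled as a \emph{conjecture} in the paper: the authors offer no proof at all, only the remark that GAP computations suggest it. So there is no argument of theirs to compare yours against, and the relevant question is whether your proposal actually closes the problem. It does not, and you say so yourself: every load-bearing step --- a monotone formula for $|2\text{-}Cent(A_n)|$, a uniform root-datum expression for $|2\text{-}Cent(G(q))|$ for groups of Lie type, and the separation of different families from one another --- is stated as a goal rather than established. A computer check up to any finite order bound cannot settle the statement, because the hard content is precisely the comparison across the infinite families (e.g.\ ruling out a coincidence between some $A_n$ and some $PSL(m,q)$ of comparable invariant), and your plan offers no mechanism for that beyond hoping the formulas turn out to be injective. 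Note also the cautionary result already cited in the introduction: Zarrin \cite{13} produced non-isomorphic finite simple groups with the \emph{same} number of element centralizers, so the entire burden of the conjecture rests on the intersection data $C_G(x)\cap C_G(y)$ that $|Cent(G)|$ misses --- which is exactly the part you identify as the ``genuinely hard residue'' and leave open. (Two smaller points: as literally stated the conjecture needs ``non-abelian,'' since every cyclic group of prime order has $|2\text{-}Cent|=1$; and your opening reduction via Lemma \ref{lemm2.2}(2) together with \cite{PSL} is sound, but it only reproduces the Remark preceding the conjecture, i.e.\ the range $|2\text{-}Cent(G)|\le 100$, where the claim is a three-group check.) In short: this is a reasonable research programme, not a proof, and the conjecture remains open.
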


\begin{thm}
Let $G$ be a group with center $Z$ such that $[G:Z]=p^n$, $p$ is prime. Moreover, we assume that the order all proper centralizers of $G$ are equal to $p|Z|$. Then,
\begin{eqnarray*}
|Cent(G)| &=& p^{n-1} + p^{n-2} + \cdots + p + 2, \\
|2-Cent(G)| &=& |Cent(G)| + 1.
\end{eqnarray*}
\end{thm}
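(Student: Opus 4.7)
The plan is to describe all proper centralizers of $G$ via the subgroup structure of $G/Z$. First I would observe that for any non-central $x\in G$, since $Z\le C_G(x)$ and $|C_G(x)|=p|Z|$ by hypothesis, the quotient $C_G(x)/Z$ is a subgroup of order $p$ in $G/Z$. As $\langle xZ\rangle\le C_G(x)/Z$ is non-trivial, it must equal $C_G(x)/Z$; hence $C_G(x)=Z\langle x\rangle$, and every non-identity element of $G/Z$ has order $p$. Moreover, each proper centralizer $Z\langle x\rangle$ has cyclic quotient by its center $Z$ and is therefore abelian, so $G$ is a $CA-$group.

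Next I would set up a bijection between the proper centralizers of $G$ and the order-$p$ subgroups of $G/Z$. The assignment $C_G(x)\mapsto C_G(x)/Z=\langle xZ\rangle$ is well-defined and injective because $C_G(x)=C_G(y)$ iff $\langle xZ\rangle=\langle yZ\rangle$, and it is surjective since, given any order-$p$ subgroup $H/Z$ of $G/Z$, picking $u\in H\setminus Z$ yields $C_G(u)=Z\langle u\rangle=H$ by the previous step. To count order-$p$ subgroups of $G/Z$, I use that $G/Z$ has exponent $p$: each non-identity element lies in a unique such subgroup, and each such subgroup contains $p-1$ non-identity elements, so the number of order-$p$ subgroups of $G/Z$ is $(p^n-1)/(p-1)=p^{n-1}+p^{n-2}+\cdots+p+1$. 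Adding $1$ for $G$ itself gives $|Cent(G)|=p^{n-1}+\cdots+p+2$.

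For the final equality, $G$ must be non-abelian (otherwise no proper centralizers exist, contradicting the formula). If $Z=1$ then $|G|=p^n$ would be a non-trivial $p$-group with trivial center, which is impossible, so $Z\ne 1$. Since $G$ is a $CA-$group with $Z(G)\ne 1$, Theorem~\ref{thm2.6}(2) immediately yields $|2-Cent(G)|=|Cent(G)|+1$. The main subtlety I anticipate is the subgroup count: I do not know a priori that $G/Z$ is abelian (for odd $p$ it could be a non-abelian exponent-$p$ group of Heisenberg type), so the partition argument must rely purely on the exponent-$p$ property and not on commutativity in $G/Z$.
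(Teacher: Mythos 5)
Your proof is correct, but it takes a somewhat different route from the paper's. The paper argues directly from the order hypothesis: since $|C_G(x):Z|=p$, any two distinct proper centralizers must intersect exactly in $Z$ (there is no room for an intermediate subgroup), which immediately yields $|2-Cent(G)|=|Cent(G)|+1$; it then counts the proper centralizers by the covering identity $|G|-|Z|=m(p|Z|-|Z|)$, i.e.\ by partitioning $G\setminus Z$ into the sets $C_G(x_i)\setminus Z$. You instead identify each proper centralizer explicitly as $Z\langle x\rangle$, deduce that $G/Z$ has exponent $p$, and put the proper centralizers in bijection with the order-$p$ subgroups of $G/Z$, counting those by the same partition argument one floor down (your caution that this needs only exponent $p$, not commutativity of $G/Z$, is well placed). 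For the second equality you observe that each $Z\langle x\rangle$ is abelian, so $G$ is a $CA-$group, check $Z\neq 1$, and invoke Theorem \ref{thm2.6}(2); this outsources the key trivial-intersection property to the $CA-$group machinery (ultimately It\^{o}'s Proposition, Theorem \ref{thm1.2}(\ref{thm1.2(12)})), whereas the paper re-proves that property in one line from the index-$p$ condition. Your version buys a sharper structural description of $Cent(G)$ (the proper centralizers are exactly the preimages of the order-$p$ subgroups of $G/Z$) and ties the result into Section 3; the paper's version is more self-contained and marginally shorter. One cosmetic point: when you say $C_G(x)$ "has cyclic quotient by its center $Z$", note that $Z$ need only be a central subgroup of $C_G(x)$, not its full center; the standard lemma you want is that a group with a central subgroup of cyclic quotient is abelian (or simply that $Z\langle x\rangle$ is generated by pairwise commuting elements).
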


\begin{proof}
Since $G$ is non-abelian and $Z \ne 1$, Lemma \ref{lemm2.1}(2) implies that $G \in 2-Cent(G)$. We  claim that for each $x, y \in G \setminus Z$, $C_G(x) = C_G(y)$ or $C_G(x) \cap C_G(y) = Z$. To prove, we assume that  $C_G(x) \cap C_G(y) \neq Z$. Thus, $Z \lneqq C_G(x) \cap C_G(y) \leq C_G(x)$. Since $|C_G(x):Z| = p$, $C_G(x) \cap C_G(y) = C_G(x) = C_G(y)$. This shows that $C_G(x) = C_G(y)$. Hence $|2-Cent(G)| = |Cent(G)| + 1$. Suppose $m$ is  the number of distinct proper centralizer of $G$. Then $|G| - |Z| = m(|C_G(x)|-|Z|)$ and so $(p^n - 1)|Z| = m(p-1)|Z|$. Therefore, $m = \frac{p^n -1}{p-1} = p^{n-1} + p^{n-2} + \cdots + p + 1$. Thus $|Cent(G)| = p^{n-1} + p^{n-2} + \cdots + p + 2$. This completes the proof.
\end{proof}

\begin{thm}\label{thm5.7}
Let $p$ be a prime number and $G$ be a group with center $Z$ such that  $\frac{G}{Z}=\mathbb{Z}_p\times\cdots\times\mathbb{Z}_p$. If all proper centralizers of $G$ are of order $p|Z|$ or $p^2|Z|$. Then,
\begin{eqnarray*}
|Cent(G)| & = & s+t+1, \\
|2-Cent(G)| & = & |Cent(G)|+1,
\end{eqnarray*}
where $s$ and $t$ are the number of distinct centralizers of $G$ of orders $p|Z|$ and $p^2|Z|$, respectively. Moreover, $s+t(p+1)=p^{n-1}+p^{n-2}+\cdots +p+1.$
\end{thm}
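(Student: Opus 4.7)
The plan is to first show that $G$ is a $CA$-group, then invoke Theorem \ref{thm2.6} and finish with an element-counting argument. Since an elementary abelian group is abelian, the assumption $G/Z \cong \mathbb{Z}_p \times \cdots \times \mathbb{Z}_p$ forces $Z \neq 1$ whenever $G$ is non-abelian; in the abelian case the statement is vacuous with $s=t=0$, so I will assume $G$ is non-abelian, hence $Z \neq 1$ and $n \geq 2$.

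The critical step is to verify that $C_G(x)$ is abelian for every $x \in G \setminus Z$. If $|C_G(x)| = p|Z|$, then $C_G(x)/Z$ is cyclic of order $p$, so $C_G(x) = \langle x \rangle Z$ is plainly abelian. Suppose instead that $|C_G(x)| = p^2|Z|$. Both $Z$ and $x$ lie in $Z(C_G(x))$, so $\langle x, Z\rangle \leq Z(C_G(x))$ and hence $|Z(C_G(x))| \geq p|Z|$. If $C_G(x)$ were non-abelian, then $|Z(C_G(x))| = p|Z|$, so $[C_G(x) : Z(C_G(x))] = p$; but a group whose quotient by its center is cyclic must itself be abelian, a contradiction. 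Therefore $C_G(x)$ is abelian and $G$ is a $CA$-group.

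Since $Z(G) \neq 1$, Theorem \ref{thm2.6}(2) immediately yields $|2-Cent(G)| = |Cent(G)| + 1$. On the other hand the $1$-element centralizers of $G$ are precisely $G$ together with the $s$ proper centralizers of order $p|Z|$ and the $t$ proper centralizers of order $p^2|Z|$, so $|Cent(G)| = s + t + 1$, establishing both displayed equalities.

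For the final identity, I would combine Theorem \ref{thm1.2}(\ref{thm1.2(12)}) with the $CA$-property: for any $y \in C_G(x) \setminus Z$ one has $y \in C_G(x) \cap C_G(y)$ with $y \notin Z$, so $C_G(y) = C_G(x)$. Consequently the sets $C \setminus Z$, as $C$ ranges over the proper centralizers, partition $G \setminus Z$. Counting elements yields
\[
(p^n - 1)|Z| \;=\; |G| - |Z| \;=\; s(p|Z|-|Z|) + t(p^2|Z|-|Z|) \;=\; (p-1)|Z|\bigl(s + t(p+1)\bigr),
\]
and dividing by $(p-1)|Z|$ produces $s + t(p+1) = p^{n-1} + p^{n-2} + \cdots + p + 1$. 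The only delicate point is the $|C_G(x)| = p^2|Z|$ case of the $CA$-verification; everything else is bookkeeping or a direct appeal to Theorems \ref{thm2.6} and \ref{thm1.2}.
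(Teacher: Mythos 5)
Your proof is correct, and it reaches the same three conclusions by the same overall skeleton (establish that two distinct proper centralizers meet exactly in $Z$, deduce $|2-Cent(G)|=|Cent(G)|+1$, then count $G\setminus Z$ as a disjoint union of the sets $C\setminus Z$), but the way you establish the key dichotomy is genuinely different from the paper. The paper never mentions the $CA$-property here: it writes each proper centralizer out explicitly as a union of cosets of $Z$ (namely $\{x^iz\}$ in the order-$p|Z|$ case and $\{x^iy^jz\}$ in the order-$p^2|Z|$ case, using $x^p\in Z$), verifies by hand that $C_G(u)=C_G(x)$ for the non-central $u$ in $C_G(x)$, and derives the dichotomy from that. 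You instead prove directly that every proper centralizer is abelian --- the order-$p|Z|$ case being $\langle x\rangle Z$, and the order-$p^2|Z|$ case via the observation that $\langle x,Z\rangle\leq Z(C_G(x))$ forces $[C_G(x):Z(C_G(x))]\leq p$, so a cyclic central quotient would make $C_G(x)$ abelian anyway --- and then you quote Theorem \ref{thm2.6}(2) and the It\^{o} dichotomy of Theorem \ref{thm1.2}(\ref{thm1.2(12)}) as black boxes. Your route is shorter and reuses the machinery the paper has already built in Section 3, which is arguably more in the spirit of the paper as a whole; the paper's route is more self-contained and, as a by-product, exhibits the centralizers explicitly. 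The concluding counting argument, $(p^n-1)|Z|=s(p-1)|Z|+t(p^2-1)|Z|$, is identical in both. One small remark: the statement is not literally ``vacuous'' for abelian $G$ (the equality $|2-Cent(G)|=|Cent(G)|+1$ fails there since both sides of the first relation equal $1$), but the paper also tacitly assumes $G$ non-abelian, so your reduction to that case is the right reading.
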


\begin{proof}
Since $G$ is a non-abelian group and $Z$ is not a trivial subgroup, Lemma \ref{lemm2.1}(2) implies that $G \in 2-Cent(G)$. It is clear that for every $x\in G\setminus Z$, $x^p\in Z$. In what follow, two cases that $|C_G(x)|=p|Z|$ and $|C_G(x)|=p^2|Z|$ are considered separately.
\begin{enumerate}
\item $|C_G(x)|=p|Z|$. In this case, $A=\{x^iz\mid 0\leq i\leq p-1 \ \& \ z \in Z\}\subseteq C_G(x)$ and since $|A|=p|Z|$, $C_G(x)=A$. It is clear that  $C_G(x)\subseteq C_G(x^i)$, $1\leq i\leq p-1$. We prove that under the condition that $1\leq i\leq p-1$, $C_G(x) = C_G(x^i)$. Since $(i,p)=1$, there exists $m$ and $k$ such that $mi+kp=1$. Suppose $y\in C_G(x^i)$. Then, $y(x^i)^m = (x^i)^my$ and so $yx^{1-kp}  =  x^{1-kp}y$. Since $x^p \in Z$, $yx  =  xy$ which implies that $y \in C_G(x)$. Therefore,
\begin{equation}\label{equ1}
C_G(x^i)=C_G(x), 1\leq i\leq p-1.
\end{equation}

\item If $|C_G(x)|=p^2|Z|$. In this case, $A=\{x^iz\mid 0\leq i\leq p-1 \ \& \ z \in Z\}\subseteq C_G(x)$ and since $|A|=p|Z|$, $A\subset C_G(x)$. Choose $y\in C_G(x)\setminus A$. Then $B=\{x^iy^jz\mid 0\leq i,j\leq p-1\}\subseteq C_G(x)$.
Note that $|B|=p^2|Z|$ and so $C_G(x)=B$. Since $y\in C_G(x)$,  $C_G(x)\subseteq C_G(x^iy^j)$,  $1\leq i,j\leq p-1$. By assumption and our last inclusion, $p^2|Z|=|C_G(x)|\leq |C_G(x^iy^j)|\leq p^2|Z|$. Therefore,
\begin{equation}\label{equ2}
C_G(x^iy^j)=C_G(x), 1\leq i,j\leq p-1.
\end{equation}
\end{enumerate}
We claim that for every $x,y\in G\setminus Z$, one of the following hold:
\begin{enumerate}
\item[(1)] If $|C_G(x)|=|C_G(y)|$, then  $C_G(x)=C_G(y)$ or $C_G(x)\cap C_G(y)=Z$.

\item[(2)] If $|C_G(x)|\neq|C_G(y)|$, then $C_G(x)\cap C_G(y)=Z$.
\end{enumerate}
Suppose $C_G(x)\cap C_G(y)\neq Z$. Then there exists $u\in C_G(x)\cap C_G(y)\setminus Z$. Thus $u\in C_G(x)\setminus Z$ and $u\in C_G(y)\setminus Z$. We now apply Equations \ref{equ1} and \ref{equ2} to deduce that $C_G(x)=C_G(u)=C_G(y)$. This completes the proof of Parts $(1)$ and $(2)$.

Our above discussion show that $|2-Cent(G)|=|Cent(G)|+1$. Suppose the number of distinct centralizers of $G$ of orders $p|Z|$ and $p^2|Z|$ are $s$ and $t$, respectively. Therefore, $s(|C_G(x)|-|Z|)+t(|C_G(y)|-|Z|)  =  |G|-|Z|$ and so $s(p-1)|Z|+t(p-1)(p+1)|Z|  =  (p^n-1)|Z|$. This proves that $s+t(p+1)  =  p^{n-1}+p^{n-2}+\cdots +p+1$ which our argument.
\end{proof}

\section{Examples}
The aim of this section is to apply our results in Sections 2-5 for computing the number of $(2,n)-$centralizers in certain finite groups. We start by non-abelian $p-$groups of order $p^4$.

\begin{exa}
In this example we calculate the number of centralizers and $2-$element centralizers of a non-abelian $p-$group of order $p^4$.  It is proved that  $|Cent(G)|  =  p+2$, $p^2+2$ or $p^2+p+2$ and in any case $|2-Cent(G)|  =  |Cent(G)|+1$. Since $G$ is non-abelian, $|Z(G)|=p$ or $p^2$.
\begin{enumerate}
\item $|Z(G)|=p$. By Theorem \ref{thm1.2}(\ref{thm1.2(45)}), $|Cent(G)|=p^2+2$ or $p^2+p+2$.  Since $|G:Z(G)|=p^3$, by  Theorem \ref{thm1.2}(\ref{thm1.2(4)}), $G$ is a $CA-$group and by Theorem \ref{thm2.6}(2), $|2-Cent(G)|=|Cent(G)|+1$.

\item $|Z(G)|=p^2$. In this case,  $\frac{G}{Z(G)} \cong \mathbb{Z}_p\times \mathbb{Z}_p$ and by Theorem \ref{thm1.2}(\ref{thm1.2(9)}), $|Cent(G)|=p+2$. On the other hand, since $G$ is a group of order $p^4$ and $|Z(G)|=p^2$, $\frac{G}{Z(G)} \cong Z_p \times Z_p$. Therefore, by Theorem \ref{thm5.7}, $|2-Cent(G)|=|Cent(G)|+1$.
\end{enumerate}
\end{exa}

\begin{exa}
In this example, the number of $2-$element centralizers of a finite group $G$ with this property that $\frac{G}{Z(G)} \cong D_{2n}$ is computed, where $n \geq 3$ is a positive integer. By Corollary \ref{cor2.10}, one can easily seen that
\begin{enumerate}
\item If $Z(G)=1$, then $|Cent(D_{2n})|=|2-Cent(D_{2n})|=n+2.$
\item If $Z(G)\neq 1$ then,
\begin{enumerate}
\item If $n$ is odd. Then $|2-Cent(G)|-1=|Cent(G)|=|Cent(D_{2n})|=|2-Cent(D_{2n})|=n+2.$
\item If $n$ is even. Then $|2-Cent(G)|-1 = |Cent(G)|=n+2$ and $|2-Cent(D_{2n})|-1 = |Cent(D_{2n})|=\frac{n}{2}+2$.
\end{enumerate}
\end{enumerate}
\end{exa}

\begin{exa}
The semi-dihedral group $SD_{8n}$ can be presented as $\langle a, b \mid a^{4n} = b^{2} = e, bab = a^{2n - 1}\rangle$, where $n \geq 2$ is a positive integer. By Corollary \ref{cor2.10}, $$ |2-Cent(SD_{8n})| = |Cent(SD_{8n})| + 1 =\left\lbrace
\begin{array}{ll}
n+3 & n \ is \ odd\\
2n+3 & n \ is \ even
\end{array}
 \right.. $$
\end{exa}

\begin{exa}
The dicyclic group $T_{4n}$ can be presented as $\langle a, b \mid a^{2n} = e, a^n = b^{2}, b^{-1}ab = a^{-1}\rangle$, where $n \geq 2$ is a positive integer. Since $\frac{T_{4n}}{Z(T_{4n})} \cong D_{2n} = Z_n \rtimes Z_2$, by Corollary \ref{cor2.10}, we have  $|2-Cent(T_{4n})| = |Cent(T_{4n})| +1 = n + 3.$
\end{exa}

\begin{exa}
The  group $V_{8n}$ can be presented as $\langle a, b \mid a^{2n} = b^4 = e, aba = b^{-1}, ab^{-1}a = b\rangle$, where $n$ is a positive integer. Note that $$\frac{V_{8n}}{Z} \cong \left\{ \begin{array}{ll} Z_{2n} \rtimes Z_2 & 2\nmid n\\ Z_n \rtimes Z_2  & 2 | n \end{array}\right..  $$
Then by Corollary \ref{cor2.10}, we have
$$ |2-Cent(V_{8n})| = |Cent(V_{8n})| + 1 = \left\lbrace
\begin{array}{ll}
2n+3 & n \ is \ odd \\
n+3 & n \ is \ even
\end{array}
 \right..  $$
\end{exa}

\begin{exa}
The group $U_{2(n,m)}$ can be presented as $U_{2(n,m)} = \langle a, b \mid a^{2n} = b^m = e, aba^{-1} = b^{-1}\rangle$. 
If $m= 1, 2$. Then $U_{2(n,m)}$ is an abelian group and so $|2-Cent(U_{2(m,n)})| = |Cent(U_{2(m,n)})| = 1$. In other cases, $\frac{U_{2(m,n)}}{Z} \cong Z_m \rtimes Z_2$ and by Corollary \ref{cor2.10}, 
$$
|2-Cent(U_{2(m,n)})| =
\left\lbrace
\begin{array}{ll}
|Cent(U_{2(m,n)})| = m+2 & m \ is \ odd \ and \ n=1 \\
|Cent(U_{2(m,n)})| + 1 = m+3 & m \ is \ odd \ and \ n \neq 1 \\
|Cent(U_{2(m,n)})| + 1 = \frac{m}{2} + 3 &  m \ is \ even
\end{array}
\right..
$$
\end{exa}

\vskip 3mm

\noindent{\bf Acknowledgement.}   The research of the authors are partially supported by the University of Kashan under grant no
785149/70.

\end{document}